
\documentclass{amsart}
\usepackage{tikz}
\newtheorem{theorem}{Theorem}[section]
\newtheorem{lemma}[theorem]{Lemma}
\newtheorem{corollary}[theorem]{Corollary}
\theoremstyle{definition}
\newtheorem{definition}[theorem]{Definition}
\newtheorem{example}[theorem]{Example}

\newtheorem{problem}[theorem]{Problem}

\usepackage{comment}
\usepackage{hyperref}

\newcommand{\mattwo}[4]{\left[ \begin{array} {cc} #1 & #2 \\ #3 & #4 \end{array} \right]}

\numberwithin{equation}{section}

\begin{document}

\title{On Two Orderings of Lattice Paths}


\author{PJ Apruzzese}
\address{University of Connecticut}
\curraddr{}
\email{pj.apruzzese@uconn.edu}
\thanks{The first author was supported by the NSF grants DMS-2054561 and DMS-1802067}

\author{Kevin Cong}
\address{Harvard University, Cambridge, MA 02138}
\curraddr{}
\email{kcong@college.harvard.edu}
\thanks{The second author was supported by Jane Street Capital, the National Security Agency, and the NSF Grants DMS-2052036 and DMS-2140043.}


\keywords{}

\date{}

\dedicatory{}

\begin{abstract} The \emph{Markov numbers} are positive integers appearing as solutions to the Diophantine equation $x^2 + y^2 + z^2 = 3xyz$. These numbers are very well-studied and have many combinatorial properties, as well as being the source of the long-standing unicity conjecture. In 2018, \c{C}anak\c{c}{\i} and Schiffler showed that the Markov number $m_{\frac{a}{b}}$ is the number of perfect matchings of a certain snake graph corresponding to the Christoffel path from $(0,0)$ to $(a,b)$. Based on this correspondence, Schiffler in 2023 introduced two orderings on lattice paths. For any path $\omega$, associate a snake graph $\mathcal{G}(\omega)$ and a continued fraction $g(\omega)$. The ordering $<_M$ is given by the number of perfect matchings on $\mathcal{G}(\omega)$, and the ordering $<_L$ is given by the Lagrange number of $g(\omega)$. 

In this work, we settle two conjectures of Schiffler. First, we show that the path $\omega(a,b) = RR\cdots R UU \cdots U$ is the unique maximum over all lattice paths from $(0,0)$ to $(a,b)$ with respect to both orderings $<_M$ and $<_L$. We then use this result to prove that $\sup L(\omega)$ over all lattice paths is exactly $1+\sqrt5$. 

\end{abstract}

\maketitle

\section{Introduction}

In 1879, Andrey Markov \cite{M} initiated the study of the \textit{Markov triples}, that is, the solutions to the Diophantine equation $x^2+y^2+z^2=3xyz$. A \textit{Markov triple} is any such integer solution, and a \textit{Markov number} is any number which appears as the maximum number of some Markov triple. The smallest Markov numbers are $1$, $2$, $5$, $13$, and their corresponding Markov triples are $(1,1,1)$, $(1,1,2)$, $(1,2,5)$, $(1,5,13)$. 

The Markov numbers exhibit many combinatorial properties. For instance, they can be given a tree structure equivalent to that of the Farey or Stern-Brocot trees \cite{A}. Under this structure, if $\mathcal{M}$ denotes the set of all Markov numbers, there exists a correspondence $\phi: \mathbb{Q} \rightarrow M$ between Markov numbers and reduced fractions $\frac{p}{q}$. We denote the Markov number corresponding to a given fraction as $\phi\left(\frac{p}{q}\right) = m_\frac{p}{q}$. 

In his seminal paper, Markov showed a remarkable connection between Markov numbers and the theory of rational approximation. In particular, for any irrational number $\alpha$, its \textit{Lagrange number} $L(\alpha)$ is defined by $$L(\alpha) = \sup \left\{L: \left|\frac{p}{q} - \alpha\right| < \frac{1}{Lq^2} \mathrm{\: for\: infinitely \: many \: reduced \: } \frac{p}{q}\right\}.$$ The \textit{Lagrange spectrum} is the set of all possible values of $L(\alpha)$ for irrational $\alpha$. Then there is the following bijection between Markov numbers and the Lagrange numbers less than $3$. 

\begin{theorem}\cite{M}
    Let $m_n$ denote the $n$-th smallest Markov number and $L_n$ denote the $n$-th smallest Lagrange number. Then $L_n = \sqrt{9 - \frac{4}{m_n^2}}$.
\end{theorem}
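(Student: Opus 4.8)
The plan is to route the problem through the continued-fraction description of the Lagrange number and then to exploit the common tree structure shared by Markov triples and by the relevant continued fraction expansions. The first step is to record the classical formula. If $\alpha = [a_0; a_1, a_2, \dots]$ is the continued fraction expansion of an irrational $\alpha$ with convergents $p_n/q_n$, then the identity $\alpha = \frac{\alpha_{n+1}p_n + p_{n-1}}{\alpha_{n+1}q_n + q_{n-1}}$, where $\alpha_{n+1} = [a_{n+1}; a_{n+2}, \dots]$ is the complete quotient, gives
\[
\left|\alpha - \frac{p_n}{q_n}\right| = \frac{1}{q_n^2\bigl(\alpha_{n+1} + q_{n-1}/q_n\bigr)}, \qquad \frac{q_{n-1}}{q_n} = [0; a_n, a_{n-1}, \dots, a_1],
\]
and since the best approximations are convergents, this yields
\[
L(\alpha) = \limsup_{n \to \infty}\Bigl([a_{n+1}; a_{n+2}, a_{n+3}, \dots] + [0; a_n, a_{n-1}, \dots, a_1]\Bigr).
\]
Every summand is positive and the first is at least $a_{n+1}$, so the constraint $L(\alpha) < 3$ forces $a_n \le 2$ for all large $n$; discarding an initial segment (which changes neither $L$ nor the $\limsup$) we may assume $\alpha$ is coded by a one-sided infinite word over the alphabet $\{1,2\}$.

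Next I would determine which one-sided words over $\{1,2\}$ keep the $\limsup$ strictly below $3$. The idea is to pass to the bi-infinite extension and, by a blockwise exchange argument on the pattern of $1$'s and $2$'s, to show that the admissible sequences are exactly the periodic repetitions of the \emph{Markov words} produced by the Christoffel/Stern--Brocot construction; these are indexed by reduced fractions $p/q$, the very data that indexes the Markov tree. Concretely, the two Vieta involutions of $x^2+y^2+z^2 = 3xyz$ (replacing a coordinate $z$ by $3xy - z$) act on Markov triples in lockstep with the two substitution rules that generate Christoffel words, producing a correspondence between Markov numbers $m$ and admissible periodic words $w_m$ compatible with both tree structures. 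This matching is precisely what ties the arithmetic object (the Markov equation) to the approximation-theoretic object (the Lagrange number).

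To evaluate $L$ on a periodic word it is cleanest to work in $\mathrm{SL}_2(\mathbb{Z})$. Reading the period $w_m$ left to right and assigning the factor $\mattwo{1}{1}{1}{0}$ to each letter $1$ and $\mattwo{2}{1}{1}{0}$ to each letter $2$ produces a matrix $M_m$ whose purely periodic continued fraction has the fixed quadratic irrational of the M\"obius action of $M_m$ as its value. If $M_m = \mattwo{A}{B}{C}{D}$, then this fixed point solves $C\alpha^2 + (D-A)\alpha - B = 0$, whose discriminant is $(\operatorname{tr} M_m)^2 - 4\det M_m$. Evaluating the sum $[a_{n+1}; a_{n+2}, \dots] + [0; a_n, a_{n-1}, \dots]$ from the first step on the bi-infinite periodic sequence and maximizing over the finitely many cyclic shifts of the period, one finds that the $\limsup$ equals $\sqrt{(\operatorname{tr} M_m)^2 - 4}\,/\,m$ after identifying the normalizing denominator with the Markov number $m$. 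The one genuinely arithmetic input that remains is the identity $\operatorname{tr} M_m = 3m$, equivalently that the indefinite binary quadratic form attached to $w_m$ has discriminant $9m^2 - 4$ and arithmetic minimum $m$. Granting this, $L = \sqrt{9m^2 - 4}/m = \sqrt{9 - 4/m^2}$; since $m \mapsto \sqrt{9 - 4/m^2}$ is strictly increasing and the correspondence exhausts every Lagrange number below $3$, the $n$-th smallest Markov number is carried to the $n$-th smallest Lagrange number, which is the claim.

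The main obstacle is the middle step: proving that the admissible $\{1,2\}$-words are exactly the Markov words and that they are enumerated by the Markov tree. Both the necessity direction (ruling out every non-Markov word by exhibiting a cyclic position where the $\limsup$ reaches or exceeds $3$) and the compatibility of the word substitutions with the Vieta involutions rest on the delicate combinatorics of Christoffel words; this is the technical heart of Markov's theorem, whereas the continued-fraction formula of the first step and the final matrix computation are comparatively mechanical.
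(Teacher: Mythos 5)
The paper offers no proof of this statement: it is Markov's 1879 theorem, quoted with a citation as background, so there is nothing internal to compare your argument against. Judged on its own, your proposal is the standard roadmap (essentially the one developed over several chapters of Aigner's book, the paper's reference \cite{A}): Perron's formula $L(\alpha)=\limsup_n\bigl([a_{n+1};a_{n+2},\dots]+[0;a_n,\dots,a_1]\bigr)$, the reduction to sequences over $\{1,2\}$, the identification of the admissible sequences with periodic Christoffel/Markov words, and the closing quadratic-irrational computation. The framing is right.

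But as a proof it has two genuine gaps, which between them are essentially the entire content of the theorem. First, the claim that the bi-infinite $\{1,2\}$-sequences whose $\limsup$ stays strictly below $3$ are exactly the periodic Markov words is asserted via ``a blockwise exchange argument'' with no argument supplied; this requires both exhibiting, for every non-Markov word, a position where the two tails sum to at least $3$, and proving that every Markov word stays strictly below $3$, and it is the technical heart of the whole theorem (you say as much yourself). Second, the identity $\operatorname{tr} M_m = 3m$ --- equivalently, that the indefinite form attached to $w_m$ has discriminant $9m^2-4$ with minimum $m$ --- is explicitly ``granted''; yet this is the only place the Markov equation $x^2+y^2+z^2=3xyz$ actually enters, via the compatibility of the Vieta involutions with the word substitutions, and without it the formula $L=\sqrt{9-4/m^2}$ does not follow. (A smaller point: your discriminant $(\operatorname{tr} M_m)^2-4\det M_m$ silently takes $\det M_m=1$, which needs the period to have even length; this is true for Markov words but should be said.) What you have is an accurate table of contents for a proof of Markov's theorem, not a proof of it.
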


In a different direction, Frobenius \cite{F} asked in 1913 whether every Markov number occurred as the largest number of a unique Markov triple. This question, known as the famous \textit{unicity conjecture}, has largely resisted attack and remained unsolved for over 100 years. In 2023, Lee, Li, Rabideau, and Schiffler \cite{LLRS} proved strict inequalities on certain pairs of Markov numbers using cluster algebras, which in turn implies unicity within lines with given slope. In particular, they showed the following.

\begin{theorem}\cite{LLRS}
    Let $p > p'$. Then:
    
    \begin{enumerate}
        \item If $\frac{p-p'}{q-q'} \geq -\frac{8}{7}$, $m_{\frac{p}{q}} > m_{\frac{p'}{q'}}$.
        \item If $\frac{p-p'}{q-q'} \leq -\frac{5}{4}$, $m_{\frac{p}{q}} < m_{\frac{p'}{q'}}$.
    \end{enumerate}
    
\noindent In particular, if $m_{\frac{p}{q}} = m_{\frac{p'}{q'}}$, then $-\frac{5}{4} < \frac{p-p'}{q-q'} < -\frac{8}{7}$. 
\end{theorem}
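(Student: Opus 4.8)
The plan is to reduce both inequalities to a comparison of the $2\times 2$ integer matrix products that compute Markov numbers, since (1) and (2) concern the \emph{relative} size of $m_{p/q}$ and $m_{p'/q'}$ rather than their exact values. Concretely, I would use the Cohn--Christoffel description that underlies the Çanakçı--Schiffler snake-graph correspondence: associating to the step $R$ the matrix $\mattwo{2}{1}{1}{1}$ and to the step $U$ the matrix $\mattwo{5}{2}{2}{1}$, and reading the Christoffel path of slope $p/q$ as an ordered product $M_{p/q}$ of these two matrices, one has $m_{p/q} = \tfrac{1}{3}\operatorname{tr} M_{p/q}$ (one checks, e.g., $\operatorname{tr}\big(\mattwo{2}{1}{1}{1}\mattwo{5}{2}{2}{1}\big)=15$, giving $m_{1/2}=5$). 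I would work in this representation because each $m_{p/q}$ then becomes a \emph{continuant}, i.e. the numerator of the continued fraction $g(\omega)$ attached to the path, and continuants have clean monotonicity and comparison rules.

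Before attacking (1) and (2), I observe that the final ``in particular'' assertion is purely formal: it is the contrapositive of (1) and (2) taken together. Equality $m_{p/q}=m_{p'/q'}$ forbids both $m_{p/q}>m_{p'/q'}$ and $m_{p/q}<m_{p'/q'}$, so by (1) the slope cannot be $\ge -\tfrac{8}{7}$ and by (2) it cannot be $\le -\tfrac{5}{4}$; hence $-\tfrac{5}{4} < \frac{p-p'}{q-q'} < -\tfrac{8}{7}$, and the interval is nonempty since $-\tfrac{5}{4}<-\tfrac{8}{7}$. All the content therefore lives in the two strict inequalities. These are genuinely \emph{not} symmetric to one another—the distinct thresholds $\tfrac{8}{7}$ and $\tfrac{5}{4}$ reflect a real asymmetry in how $m_{p/q}$ grows as the endpoint $(q,p)$ moves upward versus rightward—so I would prove each separately, though by the same mechanism.

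For (1) and (2) themselves, the strategy is an induction on the Stern--Brocot tree. Any two reduced fractions are joined by a chain of mediant operations, and at the level of matrix products each mediant step appends one Cohn generator to $M_{p/q}$; I would track how appending $\mattwo{2}{1}{1}{1}$ versus $\mattwo{5}{2}{2}{1}$ changes the continuant, and how that change interacts with the fixed comparison fraction $p'/q'$. The goal is to show that, as the endpoint moves along a line of slope $s=\frac{p-p'}{q-q'}$, the quantity $m_{p/q}-m_{p'/q'}$ is monotone with sign dictated by $s$. This should reduce the whole theorem to an explicit linear inequality in $p-p'$ and $q-q'$ whose crossover occurs precisely at the reported slope, with the matching interpretation available as a fallback (constructing an injection between the two sets of perfect matchings, plus an explicit surplus of matchings, to certify the strict inequality).

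The main obstacle is obtaining the \emph{sharp} constants $-\tfrac{8}{7}$ and $-\tfrac{5}{4}$ rather than some weaker threshold. These are not forced by crude monotonicity; they arise from the extremal local configurations—the specific short Christoffel factors at which the comparison of two continuants is tightest. I expect the hard part to be a finite but delicate case analysis that isolates exactly these extremal words, shows the continuant inequality is preserved \emph{strictly} away from them, and demonstrates that it degenerates precisely at slope $-\tfrac{8}{7}$ (respectively $-\tfrac{5}{4}$). Carrying the mediant induction through without leaking a constant at each step, so that tightness at the boundary is maintained, is where the real work lies; this is presumably why the original argument of Lee--Li--Rabideau--Schiffler passes through the rigidity of cluster-algebra identities rather than relying on continuant estimates alone.
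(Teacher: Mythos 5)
This statement is quoted in the paper from Lee--Li--Rabideau--Schiffler \cite{LLRS} without proof, so there is no internal argument to compare against; your proposal must therefore stand on its own, and as a proof it has a genuine gap. The entire quantitative content of the theorem is the pair of sharp thresholds $-\frac{8}{7}$ and $-\frac{5}{4}$, and your write-up explicitly defers exactly that part (``a finite but delicate case analysis \dots{} where the real work lies'') without carrying it out. The step ``show that as the endpoint moves along a line of slope $s$, the quantity $m_{\frac{p}{q}}-m_{\frac{p'}{q'}}$ is monotone with sign dictated by $s$'' is a restatement of the conclusion, not an argument: nothing in the proposal identifies the extremal configurations, produces the linear inequality whose crossover is claimed to occur at the stated slopes, or explains why two \emph{different} constants appear on the two sides. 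What you do have correctly is only the formal part: the ``in particular'' clause does follow from (1) and (2) by contraposition, and the Cohn-matrix normalization $m_{\frac{1}{2}}=\tfrac{1}{3}\operatorname{tr}(AB)=5$ checks out.

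There is also a structural error in the induction you propose. A mediant step in the Stern--Brocot tree does not append a single Cohn generator to the matrix product: the Christoffel word of the mediant of two fractions is the \emph{concatenation} of the Christoffel words of its two tree parents, so each descent multiplies by the entire matrix of the other parent, not by one generator. Consequently the scheme of ``tracking how appending one generator changes the continuant'' does not match the actual recursive structure, and the induction as described cannot even be set up, let alone kept tight at the boundary slopes. Repairing it forces exactly the kind of global comparison of continuants along lines of fixed slope that \cite{LLRS} carry out with cluster-algebra and snake-graph identities; as written, your proposal is an outline of intent rather than a proof.
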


In 2018, \.{I}lke  \c{C}anak\c{c}{\i} and Ralf Schiffler \cite{CS} showed that the Markov numbers appear combinatorially as the number of perfect matchings of certain snake graphs. In particular, let $a > b$ be relatively prime positive integers. Let $\mathcal{P}(a,b)$ be the set of all lattice paths from $(0,0)$ to $(a,b)$, and let $\mathcal{D}(a,b) \subseteq \mathcal{P}(a,b)$ be the set of such paths lying entirely below the diagonal $y = \frac{b}{a}x$. Per convention, we will represent any path $\omega \in \mathcal{D}(a,b)$ via a sequence of $R$s and $U$s, with $R$ representing a rightward move and $U$ representing an upward move. Then the \emph{Christoffel path} $\omega_{a,b} \in \mathcal{D}(a,b)$ is the path closest to the diagonal; that is, the path obtained by starting at $(0,0)$, going up whenever possible, and going right otherwise. 

Now, let $\omega \in \mathcal{D}(a,b)$ be any lattice path below the diagonal. We define a snake graph $\mathcal{G}(\omega)$ as follows. Call a point $(i,j)$ a \emph{half-lattice} point if $2i, 2j \in \mathbb{Z}$. Then for every half-lattice point $(i,j) \in \omega$ with $i \geq 1, j \leq b-1$, place a square tile of side length $\frac{1}{2}$ with vertices $(i,j)$, $(i, j+\frac{1}{2})$, $(i-\frac{1}{2}, j)$, and $(i-\frac{1}{2}, j+\frac{1}{2})$. Then $\mathcal{G}(\omega)$ is the graph with  vertices corresponding to the vertices of some tile, and edges corresponding to edges of some tile. For instance, the Christoffel path $\omega_{3,2}$ and its corresponding snake graph $\mathcal{G}(\omega_{3,2})$ are shown in Figure \ref{1}.

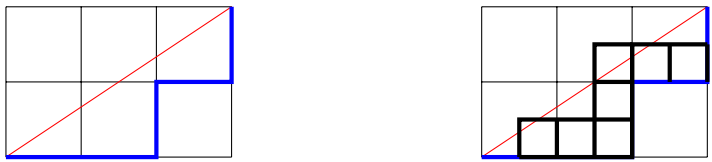
\begin{figure}[!ht]
\centering
\begin{minipage}{.5\textwidth}
  \centering
    \begin{tikzpicture}
        \draw[step=1.0,black,thin] (0,0) grid (3,2);
        \draw[red, thin] (0,0) -- (3,2);
        \draw[blue, ultra thick] (0,0) -- (2,0) -- (2,1) -- (3,1) -- (3,2);

    \end{tikzpicture}
  \label{fig:test1}
\end{minipage}%
\begin{minipage}{.5\textwidth}
  \centering
    \begin{tikzpicture}
        \draw[step=1.0,black,thin] (0,0) grid (3,2);
        \draw[red, thin] (0,0) -- (3,2);
        \draw[blue, ultra thick] (0,0) -- (2,0) -- (2,1) -- (3,1) -- (3,2);
        \draw[black, ultra thick] (0.5, 0) -- (2,0) -- (2, 1.5) -- (3, 1.5);
        \draw[black, ultra thick] (0.5, 0) -- (0.5, 0.5) -- (2, 0.5);
        \draw[black, ultra thick] (1, 0) -- (1, 0.5);
        \draw[black, ultra thick] (1.5, 0) -- (1.5, 0.5);
        \draw[black, ultra thick] (2, 0) -- (2, 1.5);
        \draw[black, ultra thick] (1.5, 0.5) -- (1.5, 1.5) -- (2, 1.5);
        \draw[black, ultra thick] (1.5, 1) -- (2, 1);
        \draw[black, ultra thick] (2.5, 1) -- (2.5, 1.5);
        \draw[black, ultra thick] (3, 1) -- (3, 1.5);

    \end{tikzpicture}
  \label{fig:test2}
\end{minipage}
\caption{The Christoffel path $\omega_{3,2} = RRURU$ (left) and its snake graph $\mathcal{G}(\omega_{3,2})$ (right). The corresponding continued fraction is $f(\omega_{3,2}) = [1,1,2,2,2,2]$.}
\label{1}
\end{figure}

Lastly, for any $\omega \in \mathcal{D}(a,b)$, recall that by convention we write $\omega$ as a word $W = w_1w_2 \cdots w_{a+b}$ of $R$ and $U$ moves. Construct a sequence as follows. For every $i$, if $w_i = w_{i+1}$, append $1,1$ to the sequence. If $w_i \neq w_{i+1}$, append $2$ to the sequence. This yields some sequence $(a_0, a_1, \ldots, a_t)$ of $1$s and $2$s. Denote by $f(\omega) = [a_0, a_1, \ldots, a_t]$ the continued fraction associated to the sequence. Then the following equivalence holds.

\begin{theorem}\cite{CS}
    The number of perfect matchings of the graph $\mathcal{G}(\omega)$ is equal to the numerator $\mathcal{N}$ of the continued fraction $f(\omega)$. Furthermore, the Markov number $m_{\frac{a}{b}}$ is precisely the number of perfect matchings of the graph $\mathcal{G}(\omega_{a,b})$, hence $m_{\frac{a}{b}} = \mathcal{N}$.
\end{theorem}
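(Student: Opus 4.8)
The plan is to prove the two assertions separately: first that the number of perfect matchings of $\mathcal{G}(\omega)$ equals the continuant (numerator) of $f(\omega)$ for an arbitrary $\omega \in \mathcal{D}(a,b)$, and then that for the Christoffel path this common value is the Markov number $m_{a/b}$.

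For the first assertion I would argue by induction on the number of tiles of $\mathcal{G}(\omega)$, matching the recursion for perfect matchings against the continuant recursion. Recall that the numerator of $[a_0,\ldots,a_t]$ is the continuant $K(a_0,\ldots,a_t)$, which satisfies $K(a_0,\ldots,a_t) = a_t\,K(a_0,\ldots,a_{t-1}) + K(a_0,\ldots,a_{t-2})$ and is the $(1,1)$-entry of the matrix product
\[
\mattwo{a_0}{1}{1}{0}\mattwo{a_1}{1}{1}{0}\cdots\mattwo{a_t}{1}{1}{0}.
\]
On the matching side, I would peel off the last tile of the snake graph and analyze how a perfect matching restricts to the remaining graph, distinguishing the two ways a tile can be glued to its predecessor: straight (the same move repeated) versus bent (a turn in the path). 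Because a straight gluing corresponds to the pair of entries $1,1$ and a turn to the single entry $2$, the local matching recursion produced by adding one tile should reproduce exactly the continuant recursion above; the base cases (a single tile, or the first two tiles) are checked directly. Assembling these local steps into a transfer-matrix product of the same $2\times 2$ matrices then forces the matching number to equal $K(a_0,\ldots,a_t) = \mathcal{N}$.

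The more delicate assertion is that the number of perfect matchings of $\mathcal{G}(\omega_{a,b})$ equals $m_{a/b}$ for the Christoffel path. Here I would invoke the cluster-algebraic realization of Markov numbers: the Markov numbers are specializations of the cluster variables of the once-punctured torus, and by the Musiker--Schiffler--Williams expansion formula each such cluster variable is the generating function of perfect matchings of the snake graph attached to the corresponding arc. The arc of slope $\frac{b}{a}$ yields precisely the snake graph $\mathcal{G}(\omega_{a,b})$ built from the Christoffel path, so specializing every initial cluster variable to $1$ collapses the Laurent polynomial to the plain count of perfect matchings, which is $m_{a/b}$. Combining this with the first assertion gives $m_{a/b} = \mathcal{N}$. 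Alternatively, one can bypass cluster algebras and use Markov's original description through products of Cohn matrices, whose traces are continuants of the doubled Christoffel word; this gives the same identification of $m_{a/b}$ with the continuant $\mathcal{N}$.

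I expect the main obstacle to be the bookkeeping in the matching recursion for the first part: one must track how a perfect matching interacts with the boundary edges of the newly added tile, treat the straight and bent cases uniformly enough that both assemble into the single continuant matrix product, and verify that the endpoint and terminal-tile conventions in the definitions of $\mathcal{G}(\omega)$ and of the sequence $(a_0,\ldots,a_t)$ line up (the example $\omega_{3,2}$, whose sequence in the caption carries an extra terminal $2$, shows these conventions are subtle). For the second part the depth is essentially imported: the real work is citing and correctly specializing the cluster expansion so that the snake graph of the arc is identified with $\mathcal{G}(\omega_{a,b})$.
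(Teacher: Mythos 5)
This theorem is not proved in the paper at all: it is quoted verbatim from \c{C}anak\c{c}{\i}--Schiffler \cite{CS} as background, so there is no internal proof to compare against. Your outline accurately reconstructs the strategy of that cited source --- an induction matching the tile-by-tile (straight versus bent gluing) recursion for perfect matchings of a snake graph against the continuant recursion for the numerator of $[a_0,\dots,a_t]$, followed by the identification of $m_{a/b}$ with the matching count of the Christoffel snake graph via the cluster algebra of the once-punctured torus and the Musiker--Schiffler--Williams expansion (or, alternatively, Cohn matrices). What you have is a sound plan rather than a proof: the places you flag as bookkeeping (the sign/turn conventions relating the word $\omega$ to the sequence of $1$s and $2$s, and the identification of $\mathcal{G}(\omega_{a,b})$ with the snake graph of the slope-$\frac{b}{a}$ arc) are exactly where \cite{CS} spends its effort, so nothing in the outline is wrong, but the substance is deferred to the citations.
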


Generalizing this correspondence, Schiffler \cite{S} defined and initiated the study of two order relations on lattice paths in $\mathcal{D}(a,b)$. For any lattice path $\omega \in \mathcal{D}(a,b)$, let $M(\omega)$ be the number of perfect matchings of $\mathcal{G}(\omega)$. Suppose that $f(\omega) = [a_0,a_1, \ldots, a_t]$. Then let $L(\omega)$ be the Lagrange number of the infinite periodic continued fraction $g(\omega) = [\overline{2,a_0,a_1, a_2, \ldots, a_t}]$. We define two order relations based on the functions $M$ and $L$. 

\begin{definition} Let $\omega, \omega' \in \mathcal{D}(a,b)$. Then: 

    \begin{enumerate}
        \item The \textit{matching ordering} relation $<_{M}$ is given by $\omega <_M \omega'$ iff $M(\omega) < M(\omega')$.
        \item The \textit{Lagrange ordering} relation $<_L$ is given by $\omega <_L \omega'$ iff $L(\omega) < L(\omega')$. 
    \end{enumerate}
\end{definition}

These orderings induce posets on $\mathcal{D}(a,b)$. Our main result completely classifies the maximal element of $\mathcal{D}(a,b)$ with respect to both orders, proving a conjecture of Schiffler \cite{S}. 
\begin{theorem}\label{6.1}
    The element $\omega(a,b) = \overbrace{RR\cdots R}^a\overbrace{UU \cdots U}^b$ is the unique maximal element of $\mathcal{D}(a,b)$ as a poset with respect to both $<_M$ and $<_L$.
\end{theorem}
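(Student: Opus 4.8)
The plan is to reduce both orderings to a single feature of the continued fraction $f(\omega)$: the number of $2$s among its partial quotients, equivalently the number of direction changes of $\omega$. Write $A=\mattwo{1}{1}{1}{0}$ and $B=\mattwo{2}{1}{1}{0}$. Since the numerator of $[a_0,\dots,a_t]$ is the $(1,1)$-entry of $\prod_i\mattwo{a_i}{1}{1}{0}$, and each pair of \emph{equal} consecutive steps of $\omega$ contributes the block $1,1$ (hence $A^2$) while each direction change contributes a $2$ (hence $B$), one has
\begin{equation}\label{transfer}
M(\omega)=\Bigl[\,\prod X\,\Bigr]_{11},\qquad X\in\{A^{2},B\},
\end{equation}
with one factor per adjacent pair of steps and one $B$-factor for each $2$ of $f(\omega)$. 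As every such pair contributes exactly $2$ to the quotient sum, that sum is the path-independent constant $2(a+b)$; moreover the runs of $1$s in $f(\omega)$ all have \emph{even} length. Because a path in $\mathcal{D}(a,b)$ must begin with $R$, end with $U$, and stay below the diagonal, its word has a single $RU$-corner precisely when it is $\omega(a,b)=R^{a}U^{b}$. Thus $\omega(a,b)$ is the unique element whose $f(\omega)$ has the fewest $2$s and the longest runs of $1$s, and the theorem becomes the assertion that both orders reward exactly this feature.

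For $<_M$ the engine is the rewriting move $U^{j}R^{i}\mapsto R^{i}U^{j}$ on the word of $\omega$: it pushes the path strictly lower, hence keeps it in $\mathcal{D}(a,b)$, and it strictly decreases the number of (U-before-R) inversions, so iterating it transforms any $\omega$ into $\omega(a,b)$. An interior such factor sits in the context $R\,U^{j}R^{i}\,U$, whose pairs read $B\,A^{2(j-1)}\,B\,A^{2(i-1)}\,B$; after the move they read $A^{2i}\,B\,A^{2j}$. The key lemma is the entrywise matrix inequality
\begin{equation}\label{key}
A^{2i}\,B\,A^{2j}\ \geq\ B\,A^{2(j-1)}\,B\,A^{2(i-1)}\,B\qquad(i,j\geq1),
\end{equation}
which I would prove by induction on $i+j$ after expanding both sides through $A^{2k}=\mattwo{F_{2k+1}}{F_{2k}}{F_{2k}}{F_{2k-1}}$, reducing it to Fibonacci identities. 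Because the ambient factors in \eqref{transfer} have nonnegative entries, \eqref{key} forces each move to not decrease $M$; this together with the closed form $M(\omega(a,b))=F_{2(a+b)+1}-F_{2a-2}F_{2b}$ (a consequence of $A^2=B+\mattwo{0}{0}{0}{1}$) yields the matching order.

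The main obstacle, and the reason a crude argument fails, is strictness. A single adjacent transposition is \emph{not} monotone for $M$: one computes $[BA^{2}B]_{11}=13>12=[A^{2}BA^{2}]_{11}$, so the flip $UURR\mapsto URUR$ genuinely decreases $M$, which is exactly why whole-run slides rather than single flips are forced. Worse, in \eqref{key} the two sides always agree in the $(1,1)$ entry, so the strict gain never comes from the moved block in isolation — it is produced only by the strictly larger off-diagonal entries of the left-hand side once they are sandwiched by the nonnegative context coming from the rest of $\omega$ (in particular from the boundary pairs). Showing this context is always positive enough to make the inequality strict for every $\omega\neq\omega(a,b)$, while verifying that the below-diagonal constraint with $a>b$ never obstructs the slides, is the technical heart of the matching half.

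For $<_L$ I would use that $L(\omega)$ is the supremum over positions $n$ of the bi-infinite periodic sequence $\overline{2,f(\omega)}$ of the two-sided value $[a_n;a_{n+1},\dots]+[0;a_{n-1},a_{n-2},\dots]$, attained at a $2$ flanked by the two longest runs of $1$s. The evenness of all runs of $1$s is essential: it forces every disrupting $2$ to sit at \emph{odd} depth in the relevant one-sided expansions, which, by the alternating monotonicity of convergents, keeps the two-sided value below the golden value $2\phi=1+\sqrt{5}$ and makes it \emph{increase} as the flanking runs lengthen; without evenness the alternating pattern $\overline{2,1}$ would push it up to $2\sqrt{3}>1+\sqrt{5}$. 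Since passing to $\omega(a,b)$ lengthens the flanking runs at the optimal position while every competitor has strictly shorter maximal runs, this monotonicity gives $L(\omega)<L(\omega(a,b))$ for all other $\omega$. The delicate point, the Lagrange analogue of the strictness issue above, is precisely this parity-sensitive monotonicity: proving that lengthening an even run of $1$s strictly increases the two-sided value at the optimal corner.
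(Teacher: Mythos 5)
Your strategy is essentially the paper's: the same run-sliding move $R\,U^{j}R^{i}\,U\mapsto R^{i+1}U^{j+1}$ handled by a transfer-matrix/Fibonacci computation for $<_M$, and for $<_L$ the reduction of the Lagrange number to a two-sided value at a partial quotient equal to $2$, compared term by term using the evenness of the runs of $1$s. Your key inequality \eqref{key} is correct (the paper's Lemma \ref{general_6.1.1}, specialized to $A=B=2$, gives exactly the statement that the two sides agree in the $(1,1)$-entry and that the discrepancy is $2F_{2u}F_{2r}$ times a bilinear form in the context), and your diagnosis that single adjacent flips fail and that strictness must come from the context is accurate.

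However, both places you label ``the technical heart'' are genuine gaps that you leave unfilled, and they are exactly where the paper does its work. For $<_M$: it is \emph{not} true that every slide is strict (a slide with empty context preserves $M$, since the $(1,1)$-entries of the two sides of \eqref{key} coincide), so you must show that some slide in your chain is strict. The paper does this by inducting on the number of blocks and reserving the strictness for the base case of four blocks, where the left context can be taken to be $R^{r_1-1}$ with $r_1\ge 2$ forced by the below-diagonal condition together with $a>b$; its Lemma \ref{6.1.1} shows equality occurs only when the left context is empty or a single $U$, which that choice rules out. Your sketch never identifies which slide is strict or why the diagonal constraint guarantees one. For $<_L$: you assert without proof both that the supremum is attained at a $2$ (the paper's Lemma \ref{split_at_2}, which needs the quantitative comparison $2\sqrt3-1<1+\sqrt3$ between the value at a $1$ and at a $2$) and the parity-sensitive monotonicity in the lengths of the flanking runs (the paper's Theorem \ref{stronger}, a direct application of the alternating comparison lemma, using that the first index of disagreement is odd precisely because runs have even length). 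These are fillable with the tools you name, but as written the proof is incomplete at its two decisive steps. Separately, your closed form $M(\omega(a,b))=F_{2(a+b)+1}-F_{2a-2}F_{2b}$ is misindexed (for $a=2$, $b=1$ it gives $12$ rather than $5$); fortunately no closed form is needed anywhere in the argument.
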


Let $\mathcal{D} = \bigcup_{(a,b)} \mathcal{D}(a,b)$ be the set of all lattice paths (of any length) below the diagonal. We then calculate exactly the supremum of the values $L(w)$ for $w \in \mathcal{D}$, proving another conjecture of Schiffler \cite{S}.

\begin{theorem}\label{6.6}
    We have $\sup \{L(\omega): \omega \in \mathcal{D}\} = 1+\sqrt5$. 
\end{theorem}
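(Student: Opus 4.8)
The plan is to combine Theorem \ref{6.1} with the classical description of the Lagrange number of a quadratic irrational as a maximum over a bi-infinite periodic sequence. First I would use Theorem \ref{6.1} to reduce the problem to the single family $\omega(a,b)=R^aU^b$: since $\omega(a,b)$ is the $<_L$-maximum of $\mathcal D(a,b)$ and $\mathcal D=\bigcup_{(a,b)}\mathcal D(a,b)$, we have $\sup_{\omega\in\mathcal D}L(\omega)=\sup_{a,b}L(\omega(a,b))$. Next I would compute $f(\omega(a,b))$ explicitly. Because each internal repeated letter contributes a pair $1,1$ while the single letter change $R\to U$ contributes a $2$, the word $R^aU^b$ yields $f(\omega(a,b))=[\,1^{2(a-1)},\,2,\,1^{2(b-1)}\,]$ (up to the boundary $2$ seen in the $\omega_{3,2}$ example). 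The crucial structural observation is that every maximal run of $1$'s has even length, being a concatenation of the pairs $1,1$. Consequently $g(\omega(a,b))=[\overline{2,1^{2(a-1)},2,1^{2(b-1)}}]$ is a periodic sequence in which each $2$ is separated from the next by a run of $1$'s of even length; any boundary $2$ only produces an adjacent pair $2,2$, which I will check gives smaller values and so is harmless.

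I would then invoke the standard formula: for $\alpha=[a_0;a_1,a_2,\dots]$ one has $L(\alpha)=\limsup_{j}\lambda_j$ with $\lambda_j=[a_j;a_{j+1},a_{j+2},\dots]+[0;a_{j-1},a_{j-2},\dots]$, so for the periodic $g(\omega(a,b))$ the Lagrange number equals $\max_j\lambda_j$ over the bi-infinite periodic extension of the period. At a position with $a_j=1$ the forward tail is below $2$ and the backward tail below $1$, so $\lambda_j<3<1+\sqrt5$; hence the maximum occurs at a position where $a_j=2$, and everything reduces to estimating $\lambda_j$ at a $2$.

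The heart of the argument, and the step I expect to be the main obstacle, is showing $\lambda_j<1+\sqrt5$ at each such $2$, with equality approached but never attained. Writing $\phi=\tfrac{1+\sqrt5}{2}$, note $[2;\overline1]=1+\phi$ and $[0;\overline1]=\phi-1$, whose sum is $2\phi=1+\sqrt5$; this is exactly the value at a $2$ flanked by infinite runs of $1$'s. For a $2$ flanked by finite even runs $1^n$ forward and $1^m$ backward, I would write the forward tail as $[2;1^n,y]$ and the backward tail as $[0;1^m,y']$, where $y,y'$ are the tails beginning at the next $2$ in each direction, so $y,y'\in(2,3)$ and in particular $y,y'>\phi=[1;\overline1]$. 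The map $x\mapsto[2;1^n,x]$ has derivative of sign $(-1)^{n-1}$ via the determinant identity $p_kq_{k-1}-p_{k-1}q_k=(-1)^{k-1}$, hence is decreasing precisely when $n$ is even; likewise $x\mapsto[0;1^m,x]$ is decreasing when $m$ is even. Since the runs are even and $y,y'>\phi$, both tails are strictly smaller than their all-ones limits, giving $\lambda_j<(1+\phi)+(\phi-1)=1+\sqrt5$. This parity bookkeeping is the delicate part: it is exactly the evenness of the runs, forced by the $1,1$ pairing in the definition of $f$, that makes the inequality point the right way, and it yields $L(\omega)<1+\sqrt5$ for every $\omega\in\mathcal D$.

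Finally I would show the bound is sharp. Taking $a=b=N$, the middle $2$ is flanked by runs $1^{2(N-1)}$ on both sides, so as $N\to\infty$ the convergents of $[2;\overline1]$ and $[0;\overline1]$ force $[2;1^{2(N-1)},y]\to1+\phi$ and $[0;1^{2(N-1)},y']\to\phi-1$, whence the corresponding $\lambda\to1+\sqrt5$. Combined with $\lambda\le L(\omega(N,N))<1+\sqrt5$ from the previous paragraph, a squeeze gives $L(\omega(N,N))\to1+\sqrt5$. Therefore $\sup\{L(\omega):\omega\in\mathcal D\}=1+\sqrt5$, a value approached in the limit but attained by no path.
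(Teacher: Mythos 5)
Your proposal is correct and follows essentially the same route as the paper: reduce to $\omega(a,b)$ via Theorem \ref{6.1}, observe that the maximum in the Lagrange formula sits at a $2$ and bound both tails against $[0,\overline{1}]$ using the evenness of the runs of $1$s (the paper packages your parity/monotonicity argument as an application of Lemma \ref{useful}, which also gives the explicit error term $2^{-(2a-3)}+2^{-(2b-3)}$ for the sharpness step), then let the runs grow to approach $1+\sqrt5$. One small slip: $\mathcal{D}(N,N)$ is not defined since the setup requires $a>b$ coprime, so in the final limiting step you should take $(a,b)=(N+1,N)$ as the paper does; nothing else in your argument changes.
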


The remainder of this paper is organized as follows. In Section \ref{2}, we present preliminary definitions and lemmas to be used in the proofs of the main theorems. In Section \ref{3}, we prove Theorem \ref{6.1} for the ordering $<_M$, and in Section \ref{4}, we prove Theorem \ref{6.1} for the ordering $<_L$. In Section \ref{5}, we prove Theorem \ref{6.6}. Lastly, in Section \ref{6}, we give concluding remarks and suggest possible further work and open problems.

\textbf{Acknowledgements.} This research was conducted while the second author was at the 2023 University of Minnesota Duluth REU. The second author thanks Joe Gallian and Colin Defant for the opportunity to take part. The authors also thank Ralf Schiffler and Hanna Mularczyk for helpful comments, and Joe Gallian and Colind Defant for proofreading the paper.

\section{Preliminaries: Continued Fractions and Lagrange Numbers}\label{2}

We now present several important definitions and lemmas which will be repeatedly used. First, let us note the following continued fraction results from Aigner \cite{A}. The first is a classical result concerning continued fraction convergents which will be repeatedly used in calculating the values of relevant continued fractions. A proof is given in Aigner; additionally, we set a convention to resolve $i = 0$. 

\begin{lemma}\cite{A}\label{repeat}
    Let $\alpha = [a_0, a_1, \ldots]$ be a continued fraction and define its \emph{convergents} to be $\frac{p_i}{q_i} = [a_0, a_1, \ldots,a_i]$, where $p_{-1} = 1, q_{-1} = 0$ by convention. Let $\alpha_k = [a_k, a_{k+1}, \ldots]$. Then for all $i \geq 0$ we have $\alpha_i = \frac{p_i\alpha_{i+1} + p_{i-1}}{q_i\alpha_{i+1} + q_{i-1}}$.
\end{lemma}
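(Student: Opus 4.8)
The plan is to prove this classical convergent identity by induction on $i$, relying on only two facts. The first is the standard convergent recurrence $p_i = a_i p_{i-1} + p_{i-2}$ and $q_i = a_i q_{i-1} + q_{i-2}$ (valid for $i \geq 1$), which together with the stated conventions $p_{-1} = 1$, $q_{-1} = 0$ and the immediate values $p_0 = a_0$, $q_0 = 1$ initializes the two sequences. The second is the self-similarity relation $\alpha_k = a_k + \frac{1}{\alpha_{k+1}}$, which is immediate from the definition $\alpha_k = [a_k, a_{k+1}, \ldots]$. Writing $\alpha = \alpha_0$ for the full continued fraction, the identity to establish reads $\alpha = \frac{p_i\alpha_{i+1} + p_{i-1}}{q_i\alpha_{i+1} + q_{i-1}}$ for every $i \geq 0$.

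For the base case $i = 0$ I would compute the right-hand side directly from the conventions:
\[
\frac{p_0\alpha_1 + p_{-1}}{q_0\alpha_1 + q_{-1}} = \frac{a_0\alpha_1 + 1}{\alpha_1} = a_0 + \frac{1}{\alpha_1} = \alpha,
\]
the last equality being the self-similarity relation at $k = 0$. For the inductive step, assuming the identity at stage $i$, I would substitute $\alpha_{i+1} = a_{i+1} + \frac{1}{\alpha_{i+2}}$ into the stage-$i$ expression, clear the inner fraction by multiplying numerator and denominator by $\alpha_{i+2}$, and regroup:
\[
\alpha = \frac{p_i\left(a_{i+1} + \frac{1}{\alpha_{i+2}}\right) + p_{i-1}}{q_i\left(a_{i+1} + \frac{1}{\alpha_{i+2}}\right) + q_{i-1}} = \frac{(a_{i+1}p_i + p_{i-1})\alpha_{i+2} + p_i}{(a_{i+1}q_i + q_{i-1})\alpha_{i+2} + q_i}.
\]
Recognizing $a_{i+1}p_i + p_{i-1} = p_{i+1}$ and $a_{i+1}q_i + q_{i-1} = q_{i+1}$ by the convergent recurrence then collapses this to the stage-$(i+1)$ form, completing the induction.

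As an alternative worth recording, the matrix viewpoint makes the bookkeeping transparent. Writing
\[
\mattwo{p_i}{p_{i-1}}{q_i}{q_{i-1}} = \mattwo{a_0}{1}{1}{0}\mattwo{a_1}{1}{1}{0}\cdots\mattwo{a_i}{1}{1}{0},
\]
each factor $\mattwo{a_k}{1}{1}{0}$ acts as the M\"obius transformation $z \mapsto a_k + \frac{1}{z}$, which sends $\alpha_{k+1}$ to $\alpha_k$ by the self-similarity relation. Since composition of M\"obius maps corresponds to matrix multiplication, the full product sends $\alpha_{i+1}$ to $\alpha_0 = \alpha$, which is exactly the claimed identity; this reduces the lemma to the one-factor functional equation plus the factorization of the convergent matrix. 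The main point requiring care is not any genuine difficulty but the edge conventions: one must verify that $p_{-1} = 1$, $q_{-1} = 0$, $p_0 = a_0$, $q_0 = 1$ are precisely the values that make the $i = 0$ case genuinely reproduce $\alpha = \alpha_0$ rather than a vacuous rearrangement.
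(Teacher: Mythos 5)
Your argument is correct, but there is nothing in the paper to compare it against: the paper quotes this lemma from Aigner and explicitly defers the proof there (``A proof is given in Aigner''). What you wrote is the standard textbook argument, and it is sound: the base case $i=0$ is exactly what the conventions $p_{-1}=1$, $q_{-1}=0$, $p_0=a_0$, $q_0=1$ are designed for, and the inductive step via the substitution $\alpha_{i+1} = a_{i+1} + \frac{1}{\alpha_{i+2}}$ together with the recurrences $p_{i+1} = a_{i+1}p_i + p_{i-1}$ and $q_{i+1} = a_{i+1}q_i + q_{i-1}$ goes through verbatim. Citing the convergent recurrence as a prior standard fact is legitimate for a lemma of this kind, though you could make the proof self-contained by deriving it from the matrix factorization you already invoke. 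Your matrix remark is in fact exactly the viewpoint the paper itself records immediately afterward: its lemma expressing $\mu(\alpha)$ as the product $\mattwo{a_0}{1}{1}{0}\cdots\mattwo{a_k}{1}{1}{0}$ is your M\"obius-composition argument in matrix form, so the two routes you present are the two the paper keeps side by side.

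One point you handled silently but should flag explicitly: as printed, the lemma asserts $\alpha_i = \frac{p_i\alpha_{i+1}+p_{i-1}}{q_i\alpha_{i+1}+q_{i-1}}$, and this literal statement is false for $i \geq 1$. For instance, with $\alpha = [1,2,a_2,\ldots]$ one has $p_0 = q_0 = 1$, $p_1 = 3$, $q_1 = 2$, so the right-hand side at $i = 1$ is $\frac{3\alpha_2+1}{2\alpha_2+1} < \frac{3}{2}$, while $\alpha_1 = 2 + \frac{1}{\alpha_2} > 2$; equating the two forces $\alpha_2^2 + 3\alpha_2 + 1 = 0$, which has no positive root. The intended classical statement has $\alpha = \alpha_0$ on the left for every $i \geq 0$ (this is the form in Aigner, and the form consistent with the paper's convention resolving $i=0$), and that is precisely what you prove. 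Your proof is of the corrected statement, which is the right thing to do, but a referee-quality writeup should note the correction rather than substitute it without comment.
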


For some proofs, it will also be convenient to represent continued fractions via matrices. 

\begin{definition}
    Let $\alpha = [a_0, a_1, \ldots, a_k]$ be a continued fraction and let $\frac{p_i}{q_i}$ be its convergents. Then the \emph{matrix representation} of $\alpha$ is $\mu(\alpha) = \mattwo{p_k}{p_{k-1}}{q_k}{q_{k-1}}$.
\end{definition}

The following is the analog of Lemma \ref{repeat} for matrix representations.

\begin{lemma}
    Let $\alpha = [a_0, a_1, \ldots, a_k]$. Then we have $$\mu(\alpha) = \mattwo{a_0}{1}{1}{0}\mattwo{a_1}{1}{1}{0} \cdots \mattwo{a_k}{1}{1}{0}.$$
\end{lemma}
As a corollary, we can inductively calculate the value of the following continued fractions. 

\begin{corollary}\label{1s}
    Let $F_t$ denote the $t$-th Fibonacci number. The following identities hold:
    \begin{enumerate}
        \item $[\overbrace{1,1,\ldots,1}^{t}] = \frac{F_{t+1}}{F_t}$,
        \item $[2,\overbrace{1,1,\ldots,1}^t] = \frac{F_{t+3}}{F_{t+1}}$, and
        \item $[\overbrace{1,1,\ldots,1}^s, 2, \overbrace{1,1,\ldots,1}^t] = \frac{F_{s+1}F_{t+3} + F_sF_{t+1}}{F_sF_{t+3}+F_{s-1}F_{t+1}}$.
    \end{enumerate}
\end{corollary}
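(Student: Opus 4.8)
The plan is to reduce all three identities to a single computation of powers of the Fibonacci matrix $M_1 = \mattwo{1}{1}{1}{0}$, exploiting the matrix representation lemma stated just above. First I would establish by induction on $t$ the classical identity $M_1^t = \mattwo{F_{t+1}}{F_t}{F_t}{F_{t-1}}$. The base case $t=1$ is immediate, and the inductive step $M_1^{t+1} = M_1^t M_1$ is a one-line matrix multiplication whose entries collapse under the Fibonacci recurrence $F_{n+1} = F_n + F_{n-1}$. This single formula is the engine for all three parts.

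For part (1), the continued fraction $[1,1,\ldots,1]$ with $t$ ones is $[a_0, \ldots, a_{t-1}]$ with every $a_i = 1$, so its matrix representation is $\mu = M_1^t$. Since $\mu(\alpha) = \mattwo{p_{t-1}}{p_{t-2}}{q_{t-1}}{q_{t-2}}$ and the value of the continued fraction equals $p_{t-1}/q_{t-1}$, reading off the first column of $M_1^t$ gives $p_{t-1} = F_{t+1}$ and $q_{t-1} = F_t$, hence the value $F_{t+1}/F_t$.

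For part (2), I would left-multiply by $M_2 = \mattwo{2}{1}{1}{0}$ and simplify $M_2 M_1^t$ using the recurrence: the top-left entry becomes $2F_{t+1} + F_t = F_{t+1} + F_{t+2} = F_{t+3}$ and the bottom-left entry is $F_{t+1}$, yielding the value $F_{t+3}/F_{t+1}$. It is convenient to record the full product $M_2 M_1^t = \mattwo{F_{t+3}}{F_{t+2}}{F_{t+1}}{F_t}$ here, since this is precisely the factor needed in part (3).

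For part (3), the matrix is $M_1^s M_2 M_1^t$; combining the power formula for $M_1^s$ with the product computed in part (2) and multiplying out gives first-column entries $F_{s+1}F_{t+3} + F_s F_{t+1}$ and $F_s F_{t+3} + F_{s-1}F_{t+1}$, matching the claimed numerator and denominator exactly. I do not expect a genuine obstacle here: the only point requiring care is the bookkeeping of Fibonacci indices at the boundary—namely the convention $p_{-1} = 1,\ q_{-1} = 0$ together with the values $F_0$ and $F_{-1}$ in the edge cases $s = 0$ or $t = 0$—so the main task is simply keeping the index conventions consistent throughout, after which the computation is purely mechanical.
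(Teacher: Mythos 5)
Your proposal is correct and follows exactly the route the paper intends: the corollary is stated without explicit proof as a consequence of the matrix representation lemma, and your computation of $M_1^t$, $M_2M_1^t$, and $M_1^sM_2M_1^t$ is precisely the ``inductive calculation'' the authors allude to. All three matrix products check out against the claimed Fibonacci expressions.
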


The next lemma is used to compare values of continued fractions with many equal convergents. For this lemma, we set a convention that any finite continued fraction $\alpha = [a_0, \ldots, a_k]$ can be represented as an infinite continued fraction via $\alpha = [a_0, \ldots, a_k, \infty, \infty, \ldots]$. 

\begin{lemma}\label{useful}\cite[Lemma 1.24]{A}
    Let $\alpha = [a_0, \ldots]$ and $\beta = [b_0, \ldots]$. Let $k$ be the first index where $a_k \neq b_k$. Then:
    \begin{enumerate}
        \item $\alpha < \beta$ if and only if $(-1)^k a_k < (-1)^kb_k$, and
        \item $|\alpha - \beta| \leq \frac{1}{2^{k-2}}$ (if $k \geq 1$). 
    \end{enumerate}
\end{lemma}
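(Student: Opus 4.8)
The plan is to prove both parts from a single exact formula for $\alpha-\beta$, obtained by exploiting that $\alpha$ and $\beta$ share their first $k$ partial quotients. Since $a_j=b_j$ for all $j<k$, the two continued fractions have identical convergents $\frac{p_i}{q_i}$ for $i\le k-1$; in particular they share $\frac{p_{k-1}}{q_{k-1}}$ and $\frac{p_{k-2}}{q_{k-2}}$. Writing $\alpha_k=[a_k,a_{k+1},\ldots]$ and $\beta_k=[b_k,b_{k+1},\ldots]$ for the respective $k$-th tails, I would apply the matrix representation, treating $\alpha_k$ as a single real ``partial quotient'', to the factorization
\[
\mu([a_0,\ldots,a_{k-1},\alpha_k])=\mattwo{p_{k-1}}{p_{k-2}}{q_{k-1}}{q_{k-2}}\mattwo{\alpha_k}{1}{1}{0},
\]
whose first column yields $\alpha=\frac{p_{k-1}\alpha_k+p_{k-2}}{q_{k-1}\alpha_k+q_{k-2}}$, and analogously $\beta=\frac{p_{k-1}\beta_k+p_{k-2}}{q_{k-1}\beta_k+q_{k-2}}$ using the \emph{same} matrix. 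Subtracting over a common denominator, the $\alpha_k\beta_k$ cross terms cancel and the determinant $p_{k-1}q_{k-2}-p_{k-2}q_{k-1}=(-1)^k$ emerges, giving
\[
\alpha-\beta=\frac{(-1)^k(\alpha_k-\beta_k)}{(q_{k-1}\alpha_k+q_{k-2})(q_{k-1}\beta_k+q_{k-2})}.
\]

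For part (1), the denominator above is strictly positive (all $q_i\ge0$ and $\alpha_k,\beta_k\ge1$), so $\operatorname{sgn}(\alpha-\beta)=\operatorname{sgn}\!\big((-1)^k(\alpha_k-\beta_k)\big)$. It then suffices to establish the sub-claim $\operatorname{sgn}(\alpha_k-\beta_k)=\operatorname{sgn}(a_k-b_k)$, after which $\alpha<\beta\iff(-1)^k(a_k-b_k)<0\iff(-1)^ka_k<(-1)^kb_k$. The sub-claim follows from the elementary containment $[c_0,c_1,\ldots]\in[c_0,c_0+1]$: if, say, $a_k<b_k$ then $\alpha_k\le a_k+1\le b_k\le\beta_k$, and symmetrically, with the $\infty$ convention covering the case where one tail terminates.

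For part (2), I would instead invoke the nesting of convergents. Because the value of a continued fraction always lies between two consecutive convergents, and $\alpha,\beta$ share both $\frac{p_{k-1}}{q_{k-1}}$ and $\frac{p_{k-2}}{q_{k-2}}$, both numbers lie in the interval bounded by these two fractions; hence for $k\ge2$,
\[
|\alpha-\beta|\le\left|\frac{p_{k-1}}{q_{k-1}}-\frac{p_{k-2}}{q_{k-2}}\right|=\frac{|p_{k-1}q_{k-2}-p_{k-2}q_{k-1}|}{q_{k-1}q_{k-2}}=\frac{1}{q_{k-1}q_{k-2}}.
\]
The standard denominator bound $q_i\ge F_{i+1}$, immediate from $q_i=a_iq_{i-1}+q_{i-2}\ge q_{i-1}+q_{i-2}$, then gives $q_{k-1}q_{k-2}\ge F_kF_{k-1}\ge2^{k-2}$, where the last inequality follows by induction from $F_k\ge2F_{k-2}$ with base cases $k=2,3$. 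The boundary case $k=1$ (where $q_{k-2}=q_{-1}=0$ makes the interval unbounded) is handled directly: then $\alpha,\beta\in[a_0,a_0+1]$, so $|\alpha-\beta|\le1\le2=\frac{1}{2^{k-2}}$.

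The routine algebra here is the cancellation producing the boxed formula and the Fibonacci estimate; I expect the genuine obstacle to be the bookkeeping around non-uniqueness of continued fraction representations. Because $[\ldots,c,1]=[\ldots,c+1]$, two sequences differing first at index $k$ can nonetheless represent the same real number, which would make the strict comparison in part (1) false. The resolution is to read the statement under a fixed canonical convention (equivalently, the $\infty$-padding convention fixed before the lemma), under which the sub-claim $a_k<b_k\Rightarrow\alpha_k<\beta_k$ is strict; I would isolate and dispatch precisely these degenerate tails so that the sign argument applies verbatim.
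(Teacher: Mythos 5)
The paper offers no proof of this lemma at all --- it is quoted verbatim from Aigner \cite[Lemma 1.24]{A} --- so there is no internal argument to compare against; your proposal is judged on its own. It is correct, and it is essentially the standard (Aigner-style) argument: the tail formula $\alpha=\frac{p_{k-1}\alpha_k+p_{k-2}}{q_{k-1}\alpha_k+q_{k-2}}$ (the paper's Lemma \ref{repeat}, shifted by one index) plus the determinant identity $p_{k-1}q_{k-2}-p_{k-2}q_{k-1}=(-1)^k$ yields your key formula $\alpha-\beta=\frac{(-1)^k(\alpha_k-\beta_k)}{(q_{k-1}\alpha_k+q_{k-2})(q_{k-1}\beta_k+q_{k-2})}$, which settles part (1); for part (2) your switch to convergent nesting, $|\alpha-\beta|\leq\frac{1}{q_{k-1}q_{k-2}}$ together with $q_i\geq F_{i+1}$ and $F_kF_{k-1}\geq 2^{k-2}$, is sound (the Fibonacci induction and the separate $k=1$ check both verify). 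You were also right to flag the one genuine subtlety: because $[\ldots,c,1]=[\ldots,c+1]$, the strict biconditional in (1) fails for non-canonical terminating representations (e.g.\ $[1,1]$ versus $[2]$), so the statement must be read under a fixed convention such as the $\infty$-padding one the paper adopts just before the lemma. It is worth noting that in every application in this paper (Lemmas \ref{split_at_2} and \ref{maincalcfor66}, Theorem \ref{stronger}) the continued fractions compared are infinite periodic with partial quotients in $\{1,2\}$, so your degenerate cases never arise and the strict inequality in your sub-claim $a_k<b_k\Rightarrow\alpha_k<\beta_k$ holds automatically since every tail satisfies $\alpha_{k+1}>1$.
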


Lastly, we note the following alternate expression for Lagrange numbers in terms of a maximum of certain sums of continued fractions, which follows from two results in \cite{A}. We will mainly work with this expression. 


\begin{lemma}\label{lagrange_max}\cite[Lemma 1.28, Proposition 1.29]{A}
    Let $\alpha = \left[ \overline{a_0, a_1, \dots, a_n} \right]$ be an infinite periodic continued fraction and define its \emph{shifts} to be $\rho_k = \left[ \overline{a_k, a_{k+1}, \dots, a_{k-1}} \right]$ for $0 \leq k \leq n$. Similarly, define its \emph{conjugate shifts}  $\rho_k' = -\left[0, \overline{a_{k-1}, a_{k-2}, \ldots, a_k}\right]$. Then
    \[ L(\alpha) = \max_{0 \leq k \leq n} (\rho_k - \rho_k'). \]
\end{lemma}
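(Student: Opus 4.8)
The plan is to reduce $L(\alpha)$ to a $\limsup$ of approximation constants taken along the convergents of $\alpha$, and then to evaluate that $\limsup$ as a maximum over a single period by exploiting periodicity. Throughout, $\frac{p_k}{q_k}$ denotes the convergents and $\alpha_{k+1} = [a_{k+1}, a_{k+2}, \ldots]$ the forward tail, as in Lemma \ref{repeat}.

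First I would compute the exact approximation constant at each convergent. Using Lemma \ref{repeat} in the form $\alpha = \frac{p_k\alpha_{k+1}+p_{k-1}}{q_k\alpha_{k+1}+q_{k-1}}$, subtracting $\frac{p_k}{q_k}$, and applying the determinant identity $p_kq_{k-1}-p_{k-1}q_k = (-1)^{k-1}$ yields
\[ q_k^2\left|\alpha - \frac{p_k}{q_k}\right| = \frac{1}{\alpha_{k+1} + q_{k-1}/q_k}. \]
Since $q_{k-1}/q_k = [0, a_k, a_{k-1}, \ldots, a_1]$, the denominator on the right is the sum of the forward tail from index $k+1$ and the reversed finite fraction built from the first $k$ partial quotients. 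Invoking the classical best-approximation property (any $\frac pq$ with $|\alpha - \frac pq| < \frac{1}{2q^2}$ is a convergent of $\alpha$) together with the a priori bound $L(\alpha)\ge\sqrt5 > 2$, the supremum defining $L(\alpha)$ is attained along convergents, so
\[ L(\alpha) = \limsup_{k\to\infty}\left(\alpha_{k+1} + [0, a_k, a_{k-1}, \ldots, a_1]\right). \]

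Next I would pass to the limit using the periodicity of $\alpha = [\overline{a_0,\ldots,a_n}]$. Re-indexing by the residue $j \equiv k+1 \pmod{n+1}$, the forward tail is exactly periodic, so $\alpha_{k+1} = \rho_j$; meanwhile the backward piece $[0, a_k, a_{k-1}, \ldots, a_1]$ is a truncation of the infinite periodic reversed fraction $[0, \overline{a_{j-1}, a_{j-2}, \ldots, a_j}] = -\rho_j'$, and by Lemma \ref{useful}(2) these truncations converge to it at rate $O(2^{-k})$ within the residue class. Hence along each residue class the approximation constant converges to $\rho_j + (-\rho_j') = \rho_j - \rho_j'$, and since there are only finitely many residue classes, each convergent, the overall $\limsup$ equals $\max_{0\le j\le n}(\rho_j-\rho_j')$, which is the claimed identity. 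As a sanity check, Galois's theorem identifies $\rho_j'$ with the algebraic conjugate $\overline{\rho_j}$, so each summand $\rho_j - \rho_j'$ is manifestly the positive number $\rho_j - \overline{\rho_j}$.

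The main obstacle is the reduction to convergents: one must argue that approximations which are not convergents never govern $L(\alpha)$, and this rests on combining the best-approximation property with the bound $L(\alpha)\ge\sqrt5$. A second, more routine delicate point is that interchanging $\limsup$ with the maximum over residue classes is legitimate only because the backward truncations genuinely converge (via Lemma \ref{useful}(2)) rather than oscillate; once convergence within each class is established, the $\limsup$ is automatically the maximum of the finitely many limits.
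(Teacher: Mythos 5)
The paper does not prove this lemma; it is quoted directly from Aigner \cite{A} (Lemma 1.28 and Proposition 1.29), and those two results are exactly the two halves of your argument: the reduction of $L(\alpha)$ to $\limsup_k\bigl(\alpha_{k+1} + q_{k-1}/q_k\bigr)$ via the identity $q_k^2\lvert\alpha - p_k/q_k\rvert = (\alpha_{k+1}+q_{k-1}/q_k)^{-1}$ together with Legendre's criterion and Hurwitz's bound, and then the passage to $\max_k(\rho_k-\rho_k')$ by periodicity of the forward tails and convergence of the mirrored truncations $[0,a_k,\ldots,a_1]$. Your reconstruction is correct and follows essentially the same route as the cited source, so there is nothing to flag.
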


\section{Proof of Theorem \ref{6.1} for $<_M$}\label{3}
We begin with the following general statement, which exactly calculates the difference between the numerators of certain continued fractions.
\begin{lemma}\label{general_6.1.1}
    Let $$\alpha = [ a_0, \dots, a_k, A, c_0, \dots, c_m, B, b_0, \dots, b_{\ell} ]$$ be any continued fraction such that both $A \geq 2$ and $B \geq 2$, and let $$\alpha' = [ a_0, \dots, a_k, A - 1, 1, c_m, \dots, c_0, 1, B - 1, b_0, \dots, b_{\ell} ].$$ Denote the $i$-th convergent of the continued fractions $[a_0, \dots, a_k]$, $[b_0, \dots, b_{\ell}]$, and \newline $[c_0, \dots, c_m]$ by $\frac{p_i}{q_i}$, $\frac{r_i}{s_i}$, and $\frac{u_i}{v_i}$, respectively. Then the numerator of $\alpha'$ is greater than or equal to that of $\alpha$, and their difference is given by
    \begin{align*}
        \mathcal{N} (\alpha') - \mathcal{N} (\alpha) &= ( u_{m-1} + v_m + v_{m-1} ) \\ &\hspace{0.47cm}( p_k r_{\ell} (AB - A - B) + p_k s_{\ell} (A - 1) + p_{k-1} r_{\ell} (B - 1) + p_{k-1} s_{\ell} ) .
    \end{align*}
\end{lemma}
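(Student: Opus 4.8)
The plan is to use the matrix representation of continued fractions throughout, since the numerator $\mathcal{N}$ of a continued fraction $[x_0,\dots,x_n]$ is exactly the top-left entry of the product matrix $\mu = \mattwo{x_0}{1}{1}{0}\cdots\mattwo{x_n}{1}{1}{0}$. The key observation is that $\alpha$ and $\alpha'$ share the same prefix $[a_0,\dots,a_k]$ and the same suffix $[b_0,\dots,b_\ell]$, so if I write both product matrices as $P \cdot M \cdot S$ where $P = \mu([a_0,\dots,a_k]) = \mattwo{p_k}{p_{k-1}}{q_k}{q_{k-1}}$ and $S = \mattwo{r_\ell}{r_{\ell-1}}{s_\ell}{s_{\ell-1}}$ (with the appropriate matrices), then the difference $\mathcal{N}(\alpha') - \mathcal{N}(\alpha)$ comes entirely from the middle factor. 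The numerator is the $(1,1)$ entry of $P M S$, and since $P$ and $S$ are fixed, I only need to compute $M' - M$, the difference of the two middle matrices, and then extract the $(1,1)$ entry of $P(M'-M)S$.

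First I would compute the middle matrix $M$ for $\alpha$, which is
\[
M = \mattwo{A}{1}{1}{0}\, C \,\mattwo{B}{1}{1}{0},
\]
where $C = \mu([c_0,\dots,c_m]) = \mattwo{u_m}{u_{m-1}}{v_m}{v_{m-1}}$. For $\alpha'$ the middle block is $[A-1,1,c_m,\dots,c_0,1,B-1]$, so its matrix is
\[
M' = \mattwo{A-1}{1}{1}{0}\mattwo{1}{1}{1}{0}\, \widetilde{C}\, \mattwo{1}{1}{1}{0}\mattwo{B-1}{1}{1}{0},
\]
where $\widetilde{C} = \mu([c_m,\dots,c_0])$ is the matrix for the \emph{reversed} sequence. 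The crucial algebraic fact I will invoke is that reversing a continued fraction transposes its matrix up to the standard relation: $\mu([c_m,\dots,c_0]) = \mattwo{u_m}{v_m}{u_{m-1}}{v_{m-1}}$, i.e. $\widetilde{C} = C^{\mathsf{T}}$ in this convention. This lets me express everything in terms of $u_m, u_{m-1}, v_m, v_{m-1}$, which is exactly what appears in the target formula through the factor $u_{m-1}+v_m+v_{m-1}$.

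The main computation is then to multiply out $M'$ and $M$ explicitly as $2\times 2$ integer matrices in the symbols $A, B, u_m, u_{m-1}, v_m, v_{m-1}$, and form $D = M' - M$. I expect $D$ to factor nicely: the presence of the scalar $(u_{m-1}+v_m+v_{m-1})$ in the final answer strongly suggests that $D$ has rank one, so that $D = (u_{m-1}+v_m+v_{m-1}) \, \mathbf{v}\mathbf{w}^{\mathsf{T}}$ for some explicit column and row vectors $\mathbf{v}, \mathbf{w}$ depending only on $A, B$. Once $D$ is in this factored form, the $(1,1)$ entry of $P D S$ is simply $(\text{row } 1 \text{ of } P)\cdot D \cdot (\text{col } 1 \text{ of } S)$, which after substituting $p_k, p_{k-1}, r_\ell, s_\ell$ and the explicit $A,B$-dependence should collapse directly into the stated bilinear expression $p_k r_\ell(AB-A-B) + p_k s_\ell(A-1) + p_{k-1}r_\ell(B-1) + p_{k-1}s_\ell$.

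The hard part will be verifying the rank-one factorization of $D = M'-M$ cleanly, since this is where all the cancellation happens and where an error in the reversal identity for $\widetilde{C}$ would silently corrupt the result; I would treat establishing $\widetilde{C}=C^{\mathsf T}$ (via the transpose of the matrix product, reversing the order of the symmetric factors $\mattwo{c_i}{1}{1}{0}$) as the linchpin lemma to get right first. Finally, nonnegativity of $\mathcal{N}(\alpha')-\mathcal{N}(\alpha)$ follows because every quantity in the factored expression is nonnegative: the convergent numerators and denominators $p_k,p_{k-1},r_\ell,s_\ell,u_{m-1},v_m,v_{m-1}$ are all nonnegative, and since $A,B\geq 2$ we have $AB-A-B = (A-1)(B-1)-1 \geq 0$ together with $A-1,B-1\geq 1$, so the whole product is $\geq 0$.
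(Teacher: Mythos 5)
Your proposal follows essentially the same route as the paper: both write $\mu(\alpha)$ and $\mu(\alpha')$ as products sharing the prefix matrix $P$ and suffix matrix $S$, use the transpose identity $\mu([c_m,\dots,c_0])=\mu([c_0,\dots,c_m])^{\mathsf T}$ for the reversed middle block, multiply out, and factor the difference of the $(1,1)$ entries. One small correction: the difference $D=M'-M$ is not rank one but rather $(u_{m-1}+v_m+v_{m-1})\mattwo{AB-A-B}{A-1}{B-1}{1}$ (a scalar times a fixed matrix of determinant $-1$); this is harmless, since the $(1,1)$ entry of $PDS$ is still $(\text{row 1 of }P)\,D\,(\text{col 1 of }S)$ and yields exactly the stated bilinear form.
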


\begin{proof}
    The matrix representation of the continued fraction $\alpha$ gives us
    \[ \mu(\alpha) = \mattwo{p_k}{p_{k-1}}{q_k}{q_{k-1}} \mattwo{A}{1}{1}{0} \mattwo{u_m}{u_{m-1}}{v_m}{v_{m-1}} \mattwo{B}{1}{1}{0} \mattwo{r_{\ell}}{r_{\ell-1}}{s_{\ell}}{s_{\ell-1}} ,\]
    and likewise
    \[ \mu(\alpha') = \mattwo{p_k}{p_{k-1}}{q_k}{q_{k-1}} \mattwo{A}{A-1}{1}{1} \mattwo{u_m}{v_m}{u_{m-1}}{v_{m-1}} \mattwo{B}{1}{B-1}{1} \mattwo{r_{\ell}}{r_{\ell-1}}{s_{\ell}}{s_{\ell-1}} .\]

    Multiplying out these matrices and taking the upper-left entries of the products gives us the numerators of $\alpha$ and $\alpha'$, and thus we see that
    \begin{align*}
        \mathcal{N} (\alpha) = \hspace{0.1cm} & p_k r_{\ell} (A B u_m + A u_{m-1} + B v_m + v_{m-1}) + {} \\
        & p_k s_{\ell} (A u_m + v_m) + {} \\
        & p_{k-1} r_{\ell} (B u_m + u_{m-1}) + {} \\
        & p_{k-1} s_{\ell} (u_m)
    \end{align*}
    and
    \begin{align*}
        \mathcal{N} (\alpha') = \hspace{0.1cm} & p_k r_{\ell} (A B u_m + (A-1) B u_{m-1} + A (B-1) v_m + (A-1) (B-1) v_{m-1}) + {} \\
        & p_k s_{\ell} (A u_m + (A-1) u_{m-1} + A v_m + (A-1) v_{m-1}) + {} \\
        & p_{k-1} r_{\ell} (B u_m + (B-1) u_{m-1} + B v_m + (B-1) v_{m-1}) + {} \\
        & p_{k-1} s_{\ell} (u_m + u_{m-1} + v_m + v_{m-1}) .
    \end{align*}

    Taking the difference $\mathcal{N} (\alpha') - \mathcal{N} (\alpha)$ will cancel the $u_m$ terms and allow $u_{m-1} + v_m + v_{m-1}$ to be factored out of the expression, yielding
    \begin{align*}
        \mathcal{N} (\alpha') - \mathcal{N} (\alpha) &= ( u_{m-1} + v_m + v_{m-1} ) \\ &\hspace{0.47cm}( p_k r_{\ell} (AB - A - B) + p_k s_{\ell} (A - 1) + p_{k-1} r_{\ell} (B - 1) + p_{k-1} s_{\ell} ),
    \end{align*} as desired. Furthermore, all the coefficients in the expression are nonnegative since $A \geq 2$ and $B \geq 2$, and therefore $\mathcal{N} (\alpha') \geq \mathcal{N} (\alpha)$.
\end{proof}
We now prove that $\omega(a,b)$ is the unique maximum of $\mathcal{D}(a,b)$ under $<_M$. First, we show the following lemma, which will specify a way to lower any path $\omega$ towards $\omega(a,b)$ without decreasing the value of $M(\omega)$. We will need the following definition.

\begin{definition}
    For a lattice path $\omega \in \mathcal{D}$, let a \emph{block} of $\omega$ be a maximal contiguous subsequence of $R$s or $U$s. The number of blocks of $\omega$ is then denoted by $b(\omega)$.
\end{definition}

The following special case of Lemma \ref{general_6.1.1} will be used in the main proof. 
\begin{lemma}\label{6.1.1}
    Let $\omega$ be any path of the form $$\omega = [S_1] R\overbrace{UU\cdots U}^u \overbrace{RR \cdots R}^rU [S_2],$$ where $[S_1]$ and $[S_2]$ are subpaths and $u,r \geq 1$. Let $\omega'$ be the path obtained by exchanging the middle blocks of $U$s and $R$s, that is, $$\omega' = [S_1]\overbrace{RR \cdots R}^{r+1} \overbrace{UU\cdots U}^{u+1}[ S_2].$$ Then $M(\omega') \geq M(\omega)$, and equality holds only if $[S_1]$ is empty or the singleton $U$. 
    
\end{lemma}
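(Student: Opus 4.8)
The plan is to reduce Lemma \ref{6.1.1} to the general computation in Lemma \ref{general_6.1.1} by translating the lattice-path move into the corresponding operation on continued fractions. First I would compute $f(\omega)$ and $f(\omega')$ using the rule that converts a word in $R,U$ into a sequence of $1$s and $2$s: a repeated letter contributes $1,1$ and a switch contributes $2$. In the word $\omega = [S_1] R U^u R^r U [S_2]$ the three internal switches ($R\to U$, $U\to R$, $R\to U$) each produce a $2$, and the block $U^u$ (respectively $R^r$) produces a string of $1$s of length governed by $u$ (respectively $r$). After the block exchange, $\omega' = [S_1] R^{r+1} U^{u+1} [S_2]$ has fewer switches, so the two $2$s bracketing the former middle blocks are replaced by the appropriate runs of $1$s. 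The key observation I would make precise is that this is exactly the substitution
\[
[\dots, A, c_0,\dots,c_m, B, \dots] \;\longmapsto\; [\dots, A-1, 1, c_m,\dots,c_0, 1, B-1, \dots]
\]
appearing in Lemma \ref{general_6.1.1}, where $A$ and $B$ are the two $2$'s (so $A=B=2$, and indeed $A,B\geq 2$), and the reversed middle block $c_m,\dots,c_0$ corresponds to the interchanged run of $1$s; one checks that reversing a palindromic run of $1$s leaves it unchanged, so the identification goes through cleanly.

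Once the identification is established, Lemma \ref{general_6.1.1} immediately gives $\mathcal{N}(\alpha')\geq \mathcal{N}(\alpha)$, and since $M(\omega)=\mathcal{N}(f(\omega))$ by Theorem \ref{CS} (the \c{C}anak\c{c}{\i}--Schiffler correspondence), this yields $M(\omega')\geq M(\omega)$. The remaining and more delicate task is the equality analysis: I must determine exactly when the difference formula
\[
(u_{m-1}+v_m+v_{m-1})\bigl(p_k r_\ell (AB-A-B)+p_k s_\ell (A-1)+p_{k-1} r_\ell (B-1)+p_{k-1} s_\ell\bigr)
\]
vanishes. With $A=B=2$ the scalar $AB-A-B=0$, so the second factor collapses to $p_k s_\ell + p_{k-1} r_\ell + p_{k-1} s_\ell$. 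Here $\frac{p_i}{q_i}$ are the convergents of the continued fraction coming from $[S_1]$ together with the leading $R$, and $\frac{r_i}{s_i}$ those coming from $[S_2]$ together with the trailing $U$; the point is to show these convergent numerators and denominators are positive except in degenerate boundary cases.

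The main obstacle I expect is precisely this equality analysis, because it requires tracking how the empty or singleton prefix $[S_1]$ affects the convergents $p_k, p_{k-1}$ (and the convention $p_{-1}=1$, $q_{-1}=0$), and confirming that the first factor $u_{m-1}+v_m+v_{m-1}$ is always strictly positive. I would argue that $u_{m-1}+v_m+v_{m-1}>0$ always (since $v_m\geq 1$ for any nonempty convergent data, by the standard positivity of continued-fraction denominators), so the vanishing is controlled entirely by the second factor. Then I would show $p_k s_\ell + p_{k-1} r_\ell + p_{k-1} s_\ell = 0$ forces $p_{k-1}=0$ and $s_\ell$-related degeneracy, which by the recursion $p_{-1}=1,\,p_0=a_0$ happens exactly when the prefix contributes no genuine convergent beyond the initial conventions — that is, when $[S_1]$ is empty or reduces to a single $U$. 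I would verify these two boundary cases by direct substitution into the difference formula, confirming that they are the only sources of equality, which completes the proof.
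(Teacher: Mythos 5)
Your overall strategy is the same as the paper's: reduce to Lemma \ref{general_6.1.1} with $A=B=2$, then analyze when the factor $p_k s_\ell + p_{k-1} r_\ell + p_{k-1} s_\ell$ vanishes. However, there is a genuine error at the crux of the reduction, namely in your identification of the middle block $(c_0,\dots,c_m)$. The word $\omega=[S_1]\,R\,U^u R^r U\,[S_2]$ produces \emph{three} internal $2$'s (from the switches $R\to U$, $U\to R$, $R\to U$), and the correct application of Lemma \ref{general_6.1.1} takes $A$ to be the first of these and $B$ to be the third, so that
\[
(c_0,\dots,c_m)=(\underbrace{1,\dots,1}_{2u-2},\,2,\,\underbrace{1,\dots,1}_{2r-2})
\]
contains the middle $2$ coming from the $U\to R$ switch. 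This block is not a run of $1$'s and is not a palindrome unless $u=r$; reversing it interchanges the run lengths $2u-2$ and $2r-2$ around the central $2$, and that reversal is precisely the mechanism by which the substitution swaps the $U^u$ and $R^r$ blocks, yielding $f(\omega')=[\dots,\underbrace{1,\dots,1}_{2r},2,\underbrace{1,\dots,1}_{2u},\dots]$. Your claim that ``reversing a palindromic run of $1$s leaves it unchanged, so the identification goes through cleanly'' is therefore false as stated: if the reversal were a no-op you would instead obtain $[\dots,\underbrace{1,\dots,1}_{2u},2,\underbrace{1,\dots,1}_{2r},\dots]$, which is $f\bigl([S_1]R^{u+1}U^{r+1}[S_2]\bigr)$ --- a different path from $\omega'$ whenever $u\neq r$. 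The inequality $\mathcal{N}(\alpha')\ge\mathcal{N}(\alpha)$ from Lemma \ref{general_6.1.1} is insensitive to this, but the statement you would end up proving concerns the wrong path.

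The error is easily repaired by carrying the reversal through honestly, and the remainder of your plan --- positivity of $u_{m-1}+v_m+v_{m-1}$ (since $v_m\ge 1$), and the observation that vanishing of the second factor forces $s_\ell=p_{k-1}=0$ and hence $[S_1]$ empty or equal to the singleton $U$ --- matches the paper's proof. The paper additionally evaluates the first factor in closed form as $2F_{2u}F_{2r}$ via Corollary \ref{1s}, which is not needed for the lemma as stated but is a cleaner way to see its positivity.
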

\begin{proof}
    First, by the construction of $f(\omega)$, there exist two sequences $(a_0, \ldots, a_k)$ and $(b_0, b_1, \ldots, b_l)$ of $1$s and $2$s such that $$f(\omega) = [a_0, \ldots, a_k, 2,\overbrace{1,1,\ldots,1}^{2u-2}, 2, \overbrace{1,1,\ldots, 1}^{2r-2}, 2,b_0, \ldots, b_l]$$ and $$f(\omega') = [a_0, \ldots, a_k, \overbrace{1,1,\ldots,1}^{2r}, 2, \overbrace{1,1,\ldots, 1}^{2u}, b_0, \ldots, b_l].$$

    Then applying Lemma \ref{general_6.1.1} with $A = B = 2$ and $(c_0, \ldots, c_m) = (\overbrace{1,\cdots,1}^{2u-2},2,\overbrace{1\cdots, 1}^{2r-2})$ and using Corollary \ref{1s}, we obtain that  \begin{align*}\label{w'>w} \frac{M(\omega') - M(\omega)}{p_k s_l + p_{k-1} r_l + p_{k-1} s_l} &= F_{2u-1}F_{2r} + F_{2u-2}F_{2r-2} + F_{2u-2}F_{2r+1} +\\ &\hspace{0.47cm} F_{2u-3}F_{2r-1} + F_{2u-2}F_{2r}+ F_{2u-3}F_{2r-2} \\ &= F_{2u}F_{2r} + F_{2u-1}F_{2r-2} + F_{2u-3}F_{2r-1} + F_{2u-2}F_{2r+1} \\ &= F_{2u}F_{2r} + F_{2u-1}F_{2r-2} + F_{2u-3}F_{2r-1} + F_{2u-2}(F_{2r-1} + F_{2r}) \\ &= F_{2u}F_{2r} + F_{2u-1}F_{2r-2} + F_{2u-1}F_{2r-1} + F_{2u-2}F_{2r} \\ &= 2F_{2u}F_{2r}.\end{align*}

    It follows that \begin{equation}\label{w'>w}M(\omega') - M(\omega) = 2F_{2u}F_{2r}[p_k s_l + p_{k-1} r_l + p_{k-1} s_l] \geq 0.\end{equation} This establishes the inequality. Lastly, note that $p_k \geq 1$ and $r_l \geq 1$ by definition. Therefore, equality holds in \ref{w'>w} only if $s_l = p_{k-1} = 0$. But if $p_{k-1} = 0$ then since $a_i \in \{1,2\}$ we must have $k = 0$, that is, $S_1$ has length $1$ or is empty. In addition, if $[S_1] = R$, then $k = 1 \neq 0$ and $a_0 = a_1 = 1$. It follows that equality holds only if $[S_1]$ is empty or the singleton $U$. This completes the proof.
\end{proof}


We can now prove the first part of the main theorem.

\begin{proof}[Proof of Theorem \ref{6.1} for $<_M$] We claim that for any $\omega \in \mathcal{D}(a,b)$ such that $\omega \neq \omega(a,b)$, $M(\omega) < M(\omega(a,b))$. Induct on the number of blocks of $\omega$, $b(\omega)$. Note that $\omega$ must begin with $R$ and end with $U$. Hence $b(\omega) = 2k$ must be even. 

Since $b(\omega) = 2$ holds only for $\omega = \omega(a,b)$, we first prove the base case of $b(\omega) = 4$. Let $$\omega = \overbrace{RR\cdots R}^{r_1} \overbrace{UU \cdots U}^{u_1} \overbrace{RR \cdots R}^{r_2} \overbrace{UU \cdots U}^{u_2}.$$ Recall that $a>b$ by assumption, and $\omega \in \mathcal{D}(a,b)$ lies entirely below the diagonal. Therefore, $r_1 \geq 2$. Let $[S_1] = \overbrace{RR\cdots R}^{r_1 - 1}$ and $[S_2] = \overbrace{UU\cdots U}^{u_2 - 1}$. Then by Lemma \ref{6.1.1}, it follows that since $$\omega(a,b) = [S_1]\overbrace{RR\cdots R}^{r_2 + 1}\overbrace{UU\cdots U}^{u_1 + 1}[S_2], $$ we have $M(\omega) < M(\omega(a,b))$. The inequality is strict as $[S_1]$ is nonempty and begins with $R$.

The proof of the inductive step is essentially identical. Suppose the statement holds for all paths $\rho$ such that $b(\rho) \leq 2(k-1)$. Let $\omega \in \mathcal{D}(a,b)$ satisfy $b(\omega) = 2k$. Then write $$\omega = [S_1]R\overbrace{UU \cdots U}^{u_1} \overbrace{RR \cdots R}^{r_1}U [S_2],$$ where $\overbrace{UU \cdots U}^{u_1} \overbrace{RR \cdots R}^{r_1}$ is some pair of consecutive blocks within $\omega$. By Lemma \ref{6.1.1}, if $$\omega' = [S_1]\overbrace{RR\cdots R}^{r_1 + 1} \overbrace{UU\cdots U}^{u_1+1} [S_2],$$ we have $M(\omega) \leq M(\omega')$. Furthermore, if the subpath $[S_1]R$ has $i$ blocks and $U[S_2]$ has $j$ blocks, then $\omega$ has precisely $i + j + 2 = 2k$ blocks, whereas $\omega'$ has $i + j = 2k-2$ blocks. By the inductive hypothesis, $M(\omega') < M(\omega(a,b))$. Hence, $M(\omega) < M(\omega(a,b))$. This completes the induction and the proof of the theorem. 
\end{proof}

\section{Proof of Theorem \ref{6.1} for $<_L$}\label{4}

We now prove that $\omega(a,b)$ is that maximal element of $\mathcal{D}(a,b)$ under the ordering $<_L$. First, we determine the form of $L(\omega)$ for any path $\omega \in \mathcal{D}(a,b)$. 
\begin{lemma}\label{split_at_2}
    Let $\alpha = \left[ \overline{a_0, a_1, \dots, a_n} \right]$ be any purely periodic continued fraction with $a_i \in \{1, 2\}$ for all $i$. If $\alpha \neq \left[ \overline{1} \right]$, then there exists $a_k = 2$ such that
    \[ L(\alpha) = 2 + \left[ 0, \overline{a_{k+1}, a_{k+2}, \dots, a_{k-1}, 2} \right] + \left[ 0, \overline{a_{k-1}, a_{k-2}, \dots, a_{k+1}, 2} \right]. \]
\end{lemma}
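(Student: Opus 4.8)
My plan is to reduce the statement, via Lemma~\ref{lagrange_max}, to showing that the maximum $L(\alpha)=\max_{0\le k\le n}(\rho_k-\rho_k')$ is attained at an index $k$ with $a_k=2$, and then to check that at such an index $\rho_k-\rho_k'$ is exactly the claimed expression. Throughout I write $\rho_k=[\overline{a_k,a_{k+1},\dots,a_{k-1}}]$ and $-\rho_k'=[0,\overline{a_{k-1},a_{k-2},\dots,a_k}]$, so that $\rho_k=a_k+1/[a_{k+1},a_{k+2},\dots]$ and $-\rho_k'=1/[a_{k-1},a_{k-2},\dots]$, where both tails are again purely periodic continued fractions with partial quotients in $\{1,2\}$.

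I would dispose of the algebraic identity first, since it is routine. If $a_k=2$, peeling off the leading partial quotient gives $\rho_k=2+[0,\overline{a_{k+1},\dots,a_{k-1},2}]$, because the tail of $\rho_k$ after the initial $2$ is the purely periodic fraction whose period is the cyclic rotation $(a_{k+1},\dots,a_{k-1},2)$. Likewise, the reversed period defining $-\rho_k'$ ends in $a_k=2$, so $-\rho_k'=[0,\overline{a_{k-1},\dots,a_{k+1},2}]$. Summing these two identities shows that $\rho_k-\rho_k'$ equals the displayed right-hand side whenever $a_k=2$; thus the entire content of the lemma is the claim that the maximum in Lemma~\ref{lagrange_max} occurs at some index with $a_k=2$.

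The heart of the argument is a pair of crude uniform bounds. Using Lemma~\ref{useful}, among all continued fractions with partial quotients in $\{1,2\}$ the smallest value is $[\overline{1,2}]=\frac{1+\sqrt3}{2}$ and the largest is $[\overline{2,1}]=1+\sqrt3$; hence every forward and backward tail above lies in $\bigl[\frac{1+\sqrt3}{2},\,1+\sqrt3\bigr]$. If $a_k=1$, then $\rho_k=1+1/[a_{k+1},\dots]\le 1+\frac{2}{1+\sqrt3}=\sqrt3$ and $-\rho_k'=1/[a_{k-1},\dots]\le\frac{2}{1+\sqrt3}=\sqrt3-1$, so $\rho_k-\rho_k'\le 2\sqrt3-1$. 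If instead $a_k=2$, then $\rho_k\ge 2+\frac{1}{1+\sqrt3}$ and $-\rho_k'\ge\frac{1}{1+\sqrt3}$, so $\rho_k-\rho_k'\ge 1+\sqrt3$. Since $1+\sqrt3>2\sqrt3-1$ (equivalently $2>\sqrt3$), every $2$-index strictly dominates every $1$-index. As $\alpha\neq[\overline1]$ guarantees at least one $a_k=2$, the maximum is attained at a $2$-index, which combined with the first step proves the lemma.

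The step I expect to be the real obstacle is resisting the temptation to analyze how $\rho_k-\rho_k'$ varies from index to index: it genuinely can have interior local maxima at $1$-indices lying inside long runs of $1$s, so a naive ``no $1$-index is a local maximum'' approach fails. The clean way out is the crude two-sided separation above, and the only points that need care are justifying the extremal values $[\overline{1,2}]$ and $[\overline{2,1}]$ from Lemma~\ref{useful} and observing that the backward tails $[a_{k-1},a_{k-2},\dots]$ are themselves periodic $\{1,2\}$-fractions, so that the same bounds apply to them.
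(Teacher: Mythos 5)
Your proposal is correct and follows essentially the same route as the paper: reduce via Lemma~\ref{lagrange_max}, bound $\rho_k-\rho_k'$ above by $2\sqrt3-1$ when $a_k=1$ and below by $1+\sqrt3$ when $a_k=2$ using the extremal fractions $\left[\overline{1,2}\right]$ and $\left[\overline{2,1}\right]$ obtained from Lemma~\ref{useful}, and conclude since $1+\sqrt3>2\sqrt3-1$ and $\alpha\neq\left[\overline{1}\right]$ guarantees a $2$-index. The only cosmetic difference is that you bound the full tails and take reciprocals where the paper bounds the reciprocal tails $\left[0,\overline{\cdots}\right]$ directly; the computations are identical.
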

\begin{proof}
    By Lemma \ref{lagrange_max}, we have
    \begin{align*}
        L(\alpha) & = \max_{0 \leq k \leq n} (\rho_k - \rho_k') \\
        & = \max_{0 \leq k \leq n} \left( \left[ \overline{a_k, a_{k+1}, \dots, a_{k-1}} \right] + \left[ 0, \overline{a_{k-1}, a_{k-2}, \dots, a_k} \right] \right) \\
        & = \max_{0 \leq k \leq n} \left( a_k + \left[ 0, \overline{a_{k+1}, a_{k+2}, \dots, a_k} \right] + \left[ 0, \overline{a_{k-1}, a_{k-2}, \dots, a_k} \right] \right),
    \end{align*}
    where the maximum is taken over all shifts of $\alpha$. Suppose first that $a_k = 1$; note that $a_i \in \{1,2\}$ for all $i$. Write $$\left[0, \overline{a_{k+1}, a_{k+2}, \ldots, a_k}\right] = \left[0, \overline{c_1, c_2, \ldots, c_{2n}}\right]$$ for $c_i \in \{1,2\}$. By repeatedly using item (1) of Lemma \ref{useful}, we have that 
    \begin{align*}
     \left[0, \overline{c_1, c_2, \ldots, c_{2n}}\right]  &\leq \left[0, \overline{1, c_2, \ldots, c_{2n}}\right] \\
     &\leq \left[0, \overline{1,2,c_3, \ldots, c_{2n}}\right] \\
     & \hspace{0.2cm} \vdots \\
     & \leq \left[0, \overline{1,2,1,2,\cdots,1,2}\right] \\
     &= \left[0,\overline{1,2}\right].
    \end{align*}
     Similarly, $$\left[ 0, \overline{a_{k-1}, a_{k-2}, \dots, a_k} \right] \leq \left[0, \overline{1,2}\right].$$ Combining these inequalities and the aforementioned expression for $L(\alpha)$, we obtain
     \begin{align*}
         L_1(\alpha) & := \max_{\substack{0 \leq k \leq n \\ a_k = 1}} \left( a_k + \left[ 0, \overline{a_{k+1}, a_{k+2}, \dots, a_k} \right] + \left[ 0, \overline{a_{k-1}, a_{k-2}, \dots, a_k} \right] \right) \\        
         &\leq 1 + \left[ 0, \overline{1, 2} \right] + \left[ 0, \overline{1, 2} \right] \\
         &= 1 + \frac{2}{1 + \sqrt{3}} + \frac{2}{1 + \sqrt{3}} \\
         &= 2 \sqrt{3} - 1 < 2.5. 
    \end{align*}
    On the other hand, if $a_k = 2$, a similar argument shows that 
    \begin{align*}
       L_2(\alpha) & := \max_{\substack{0 \leq k \leq n \\ a_k = 2}} \left( a_k + \left[ 0, \overline{a_{k+1}, a_{k+2}, \dots, a_k} \right] + \left[ 0, \overline{a_{k-1}, a_{k-2}, \dots, a_k} \right] \right) \\   
        &\geq 2 + \left[ 0, \overline{2, 1} \right] + \left[ 0, \overline{2, 1} \right] \\
        &= \left[ \overline{1, 2} \right] + \left[ \overline{1, 2} \right] = \frac{1 + \sqrt{3}}{2} + \frac{1 + \sqrt{3}}{2} \\ 
        &= 1 + \sqrt{3} > 2.5.
    \end{align*}

Thus, we conclude that if $\alpha \neq \left[\overline{1}\right]$, the maximum in the expression for $L(\alpha)$ must be attained at some $k$ for which $a_k = 2$. Thus, we have 
\begin{align*}
    L(\alpha) = 2 + \left[ 0, \overline{a_{k+1}, a_{k+2}, \dots, a_k} \right] + \left[ 0, \overline{a_{k-1}, a_{k-2}, \dots, a_k} \right],
\end{align*} as desired.
\end{proof}

Now, we can prove the following strengthening of the main theorem for $<_L$.

\begin{theorem}\label{stronger}
    Suppose $\omega, \omega' \in \mathcal{D}(a,b)$ are two paths which correspond to the infinite periodic continued fractions $g(\omega)$ and $g(\omega')$, respectively. Further suppose that there are positive integers $k$ and $l$ such that there exists a shift $g(\omega')_{t'}$ of $g(\omega')$ beginning with $2$ such that \[ g(\omega')_{t'} = [\overline{2,\underbrace{1, 1, \dots, 1}_{2(k-1)}, 2, \ldots,2,\underbrace{1, 1, \dots, 1}_{2(l-1)}}], \] and such that $i < k$, $j < l$ or $i < l$, $j < k$ for all positive integers $i$ and $j$ for which there exists a shift $g(\omega)_t$ of $g(\omega)$ beginning with $2$ of the form \[ g(\omega)_t = [\overline{2,\underbrace{1, 1, \dots, 1}_{2(i-1)},2,\ldots,2, \underbrace{1, 1, \dots, 1}_{2(j-1)}}]. \] (In the above, the central omitted terms can be any sequence of 1s and 2s). Then we have $L(\omega) < L(\omega')$. 
\end{theorem}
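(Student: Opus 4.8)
The plan is to combine the formula for $L$ from Lemma \ref{split_at_2} with a single uniform monotonicity estimate for continued fractions of the form $[0, \overline{1^{2(p-1)}, 2, \ldots}]$. First I would record the dictionary between the two descriptions in the statement. Writing $g(\omega)$ as a purely periodic word over $\{1,2\}$, its $2$'s sit (cyclically) at the block boundaries of $\omega$, and the runs of $1$'s between consecutive $2$'s have even length $2(\text{block length} - 1)$; thus a shift beginning with $2$ is exactly a choice of boundary, and the parameters $(i,j)$ are the lengths of the block immediately \emph{after} and immediately \emph{before} that boundary. By Lemma \ref{split_at_2}, $L(\omega)$ equals the maximum over all $2$-positions of $2 + F + B$, where $F$ is read forward from the boundary and $B$ backward; crucially $F$ begins with the block-after run $1^{2(i-1)}, 2$ and $B$ with the block-before run $1^{2(j-1)}, 2$. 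The same holds for $\omega'$, so in particular $L(\omega') \geq 2 + F' + B'$, where $F', B'$ are the forward/backward continued fractions at the distinguished $(k,l)$-boundary, with leading runs $1^{2(k-1)}, 2$ and $1^{2(l-1)}, 2$ respectively.

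The key lemma I would isolate is the following uniform monotonicity: if $\gamma = [0, \overline{1^{2(p-1)}, 2, \ldots}]$ and $\delta = [0, \overline{1^{2(q-1)}, 2, \ldots}]$ are any two purely periodic continued fractions over $\{1,2\}$ whose leading blocks of $1$'s have lengths $2(p-1)$ and $2(q-1)$ with $p < q$, then $\gamma < \delta$, \emph{regardless of the two tails}. This is immediate from Lemma \ref{useful}(1): writing both as $[0, c_1, c_2, \ldots]$, they agree on $c_1 = \cdots = c_{2p-2} = 1$ and first differ at index $2p-1$, where $\gamma$ carries the entry $2$ (its first separator) while $\delta$ still carries $1$; since $2p-1$ is odd, the larger entry yields the smaller value, so $\gamma < \delta$. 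The essential point is that the comparison is decided entirely by the leading runs and is therefore uniform over all continuations — this is precisely why the arbitrary middle portions of the words never enter the argument.

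With this lemma in hand the theorem follows quickly. Take any boundary of $\omega$ with parameters $(i,j)$, contributing $2 + F_{ij} + B_{ij}$ to the maximum defining $L(\omega)$, where $F_{ij}$ and $B_{ij}$ have leading blocks $i$ and $j$. In the case $i < k$ and $j < l$, the lemma gives $F_{ij} < F'$ and $B_{ij} < B'$; in the case $i < l$ and $j < k$, it gives $F_{ij} < B'$ and $B_{ij} < F'$. Either way $F_{ij} + B_{ij} < F' + B'$, so every term in the maximum defining $L(\omega)$ is strictly below $2 + F' + B' \leq L(\omega')$. As there are only finitely many boundaries, taking the maximum preserves the strict inequality and yields $L(\omega) < L(\omega')$, as desired.

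The main obstacle I anticipate is bookkeeping rather than analytic: making the dictionary of the first paragraph fully precise — in particular verifying the parity claim that the first separator $2$ in each forward/backward expansion occurs at the \emph{odd} index $2(\text{block}) - 1$, and correctly matching ``forward/backward'' with ``block after/block before'' in the cyclic word. Once the odd-index fact is pinned down, the sign in Lemma \ref{useful}(1) is exactly what forces the inequality to point in the right direction, and everything else is the two-case comparison above. I would also dispatch the harmless technical point that $g(\omega) \neq [\overline 1]$, since it always contains the prepended $2$, so Lemma \ref{split_at_2} genuinely applies to both $\omega$ and $\omega'$.
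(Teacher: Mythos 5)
Your proposal is correct and follows essentially the same route as the paper: reduce via Lemma \ref{split_at_2} to shifts beginning with $2$, then compare the forward and backward continued fractions using Lemma \ref{useful}(1), where the first disagreement occurs at the odd index $2p-1$ determined by the shorter leading run of $1$'s, and add the two inequalities. The only (harmless) presentational differences are that you bound every boundary term rather than just the maximizing one and treat both cases of the hypothesis explicitly instead of arguing without loss of generality.
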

\begin{proof}
    First by Lemma \ref{split_at_2} and the condition, we must have 
    \[ L(\omega) = 2 + [0, \overline{\underbrace{1, 1, \dots, 1}_{2(i-1)}, 2, \dots, 2, \underbrace{1, 1, \dots, 1}_{2(j-1)}, 2}] + [ 0, \overline{\underbrace{1, 1, \dots, 1}_{2(j-1)}, 2, \dots, 2, \underbrace{1, 1, \dots, 1}_{2(i-1)}, 2}], \]
    where  $i < k$, $ j < l$ or $i < l$, $ j < k$. Assume without loss of generality that $i < k$, $j < l$. Note that  by Lemma \ref{lagrange_max}, we also have  
    \begin{align*}
        L(\omega') & = \max_{0 \leq k \leq n} (\rho_k - \rho_k') \\
        & = \max_{0 \leq k \leq n} \left( a_k + \left[ 0, \overline{a_{k+1}, a_{k+2}, \dots, a_k} \right] + \left[ 0, \overline{a_{k-1}, a_{k-2}, \dots, a_k} \right] \right) \\ 
        &\geq 2 + [0, \overline{\underbrace{1, 1, \dots, 1}_{2(k-1)}, 2, \dots, 2, \underbrace{1, 1, \dots, 1}_{2(l-1)}, 2}] + [ 0, \overline{\underbrace{1, 1, \dots, 1}_{2(l-1)}, 2, \dots, 2, \underbrace{1, 1, \dots, 1}_{2(k-1)}, 2}].
    \end{align*}

    Let us consider the first continued fraction in the expression for $L(\omega)$ and the lower bound for $L(\omega')$ given above. Since they are equal for the first $2i-1$ terms and the $2i$-th term of the latter is smaller, by item (1) of Lemma \ref{useful} we have
    
    \[ [0, \overline{\underbrace{1, 1, \dots, 1}_{2(i-1)}, 2, \dots, 2, \underbrace{1, 1, \dots, 1}_{2(j-1)}, 2}] < [0, \overline{\underbrace{1, 1, \dots, 1}_{2(k-1)}, 2, \underbrace{1, 1, \dots, 1}_{2(l-1)}, 2}]. \]
    Similarly, we have
    
    \[ [ 0, \overline{\underbrace{1, 1, \dots, 1}_{2(j-1)}, 2, \dots, 2, \underbrace{1, 1, \dots, 1}_{2(i-1)}, 2}] < [0, \overline{\underbrace{1, 1, \dots, 1}_{2(l-1)}, 2, \underbrace{1, 1, \dots, 1}_{2(k-1)}, 2}]. \]
    
    It follows by adding the above inequalities that $L(\omega) < L(\omega')$, as desired.
\end{proof}

The main theorem for $<_L$ now follows as a corollary.

\begin{proof}[Proof of Theorem \ref{6.1} for $<_L$]
    
    First, note that the path $\omega(a,b)$ has the corresponding periodic continued fraction $$g(\omega(a,b)) = [\overline{2, \underbrace{1, 1, \dots, 1}_{2(a-1)}, 2, \underbrace{1, 1, \dots, 1}_{2(b-1)}}].$$ 
    
    On the other hand, let $\omega \neq \omega(a,b)$ be any other path in $\mathcal{D}(a,b)$. Then, because $\omega$ contains $a$ total $R$s and $b$ total $U$s and $\omega \neq \omega(a,b)$, any block of $R$s must have fewer than $a$ terms and likewise any block of $U$s must have fewer than $b$ terms. Then consider any shift of $g(\omega)$, say
    \[g(\omega)_t = [\overline{2,\underbrace{1, 1, \dots, 1}_{2(i-1)},2,\ldots,2, \underbrace{1, 1, \dots, 1}_{2(j-1)}}],\] which begins with $2$. Because $g(\omega)_t$ begins with $2$, it must correspond to some cyclic shift of $\omega$ which begins and ends with different moves. That is, $g(\omega)_t = g(\omega')$ for some $\omega' \in \mathcal{P}(a,b)$\footnote{Let us remark that $\omega'$ may not necessarily be in $\mathcal{D}(a,b)$. In this case, however, we define $f(\omega')$ and $g(\omega')$ in the same way as when $\omega' \in \mathcal{D}(a,b)$.} satisfying either $\omega' = R[X]U$ or $\omega' = U[X]R$. If $\omega'$ begins with $R$, it follows by the above remarks that $i < a$, $j < b$. Otherwise, we have $i < b$, $j < a$. In either case, using Theorem \ref{stronger} we find that $L(\omega) < L(\omega(a,b))$, completing the proof.
\end{proof}

Combining the results of these two sections yields a complete proof of Theorem \ref{6.1}. 
\section{Proof of Theorem \ref{6.6}}\label{5}

In this section, we use Theorem \ref{6.1} to prove Theorem \ref{6.6}. First, we prove the following lemma concerning the values of $L(\omega(a,b))$. 

\begin{lemma}\label{maincalcfor66}
    For all positive integers $a,b$ we have $$1+\sqrt5 - \left(\frac{1}{2^{2a-3}}+ \frac{1}{2^{2b-3}}\right)  < L(\omega(a,b)) < 1 + \sqrt5.$$
\end{lemma}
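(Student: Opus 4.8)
The plan is to first pin down the exact value of $L(\omega(a,b))$ using Lemma \ref{split_at_2}, and then to bound the two continued fractions appearing in that expression by comparing each against $[0,\overline{1}]$ via Lemma \ref{useful}.

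First I would record that the path $\omega(a,b)$ has associated periodic continued fraction
\[ g(\omega(a,b)) = [\overline{2, \underbrace{1,1,\dots,1}_{2(a-1)}, 2, \underbrace{1,1,\dots,1}_{2(b-1)}}], \]
whose period contains exactly two entries equal to $2$, namely the leading one and the central one. Since $g(\omega(a,b)) \neq [\overline{1}]$, Lemma \ref{split_at_2} applies, and the maximizing shift must begin at one of these two $2$'s. A short computation shows that both choices produce the same value, so that
\[ L(\omega(a,b)) = 2 + [0, \overline{\underbrace{1,\dots,1}_{2(a-1)},2,\underbrace{1,\dots,1}_{2(b-1)},2}] + [0, \overline{\underbrace{1,\dots,1}_{2(b-1)},2,\underbrace{1,\dots,1}_{2(a-1)},2}], \]
the two summands being the forward and backward readings of the period, which are simply interchanged by the symmetry $a \leftrightarrow b$ between the two cases.

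Next I would compare each summand to $[0,\overline{1}] = \frac{\sqrt5-1}{2}$, after noting that the target constant satisfies $1+\sqrt5 = 2 + 2\cdot\frac{\sqrt5-1}{2}$. In the first summand the continued fraction agrees with $[0,\overline{1}]$ through entries $b_0,\dots,b_{2a-2}$ and first differs at index $k = 2a-1$, where it has a $2$ against the $1$ of $[0,\overline{1}]$. Since $k$ is odd, item (1) of Lemma \ref{useful} gives $[0, \overline{\underbrace{1,\dots,1}_{2(a-1)},2,\dots}] < [0,\overline{1}]$; the analogous statement at index $2b-1$ handles the second summand. Summing the two one-sided inequalities yields the upper bound $L(\omega(a,b)) < 2 + 2\cdot\frac{\sqrt5-1}{2} = 1+\sqrt5$.

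For the lower bound I would apply item (2) of the same lemma at the same two indices: the first summand satisfies $\bigl| [0, \overline{\underbrace{1,\dots,1}_{2(a-1)},2,\dots}] - [0,\overline{1}] \bigr| \le 2^{-(2a-3)}$, and likewise the second summand lies within $2^{-(2b-3)}$ of $[0,\overline{1}]$. Combined with the one-sided inequalities already established, each summand therefore exceeds $[0,\overline{1}]$ minus its respective error term, and summing gives $L(\omega(a,b)) > 1+\sqrt5 - \bigl(2^{-(2a-3)} + 2^{-(2b-3)}\bigr)$, as claimed. The only delicate point is the bookkeeping: one must correctly locate the first index of disagreement so that the sign in Lemma \ref{useful}(1) and the exponent in Lemma \ref{useful}(2) come out right, and one must verify that the two admissible shifts in Lemma \ref{split_at_2} really give equal values. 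Once these are checked the estimate is immediate, and degenerate small cases (for instance $a=1$, where $2^{-(2a-3)}=2$ makes the lower bound weak but still valid) need no separate treatment.
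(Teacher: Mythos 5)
Your proposal is correct and follows essentially the same route as the paper: both derive the exact expression for $L(\omega(a,b))$ from Lemma \ref{split_at_2} and then apply items (1) and (2) of Lemma \ref{useful} at the first index of disagreement with $[0,\overline{1}]$ to get the upper bound and the error estimate. Your version simply spells out the index bookkeeping ($k=2a-1$, hence exponent $2a-3$) that the paper leaves implicit.
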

\begin{proof}
    As in the proof of Theorem \ref{6.1}, we know from Lemma \ref{split_at_2} that \[ L(\omega(a,b)) = 2 + [0, \overline{\underbrace{1, 1, \dots, 1}_{2(a-1)}, 2, \underbrace{1, 1, \dots, 1}_{2(b-1)}, 2}] + [0, \overline{\underbrace{1, 1. \dots, 1}_{2(b-1)}, 2, \underbrace{1, 1, \dots, 1}_{2(a-1)}, 2}]. \] Then by item $(1)$ of Lemma \ref{useful}, we know that $$L(\omega(a,b)) < 2 + [0,\overline{1, \ldots, 1}] + [0, \overline{1,\ldots,1}] = 1+\sqrt5$$ and furthermore by (2) of the same Lemma we have $$|1 + \sqrt5 - L(\omega(a,b))| < \frac{1}{2^{2a-3}} + \frac{1}{2^{2b-3}}.$$ This completes the proof. 
\end{proof}

Now, we proceed to the proof of the main result of this section. 

\begin{proof}[Proof of Theorem \ref{6.6}]
    Let $\omega \in \mathcal{D}(a,b)$ be some path and note that by Theorem \ref{6.1}, we have $L(\omega) \leq L(\omega(a,b))$. It follows from Lemma \ref{maincalcfor66} that $L(\omega) \leq L(\omega(a,b)) < 1 + \sqrt5$. 

    Now, take $a = n+1, b = n$. By Lemma \ref{maincalcfor66}, $L(\omega(n+1,n)) > 1 + \sqrt5 - \frac{5}{2^{2n-1}}$. Letting $n \rightarrow \infty$, we conclude that $$ \lim_{n \rightarrow \infty}  L(\omega(n+1,n)) = 1+\sqrt5.$$ Combining these two results, we conclude that $\sup_{\omega \in \mathcal{D}} L(\omega) = 1+\sqrt5$.
\end{proof}

\section{Concluding Remarks}\label{6}

We conclude with a few notes on other questions in \cite{S} and further work. In \cite{S}, Schiffler asked about the covering relation of $\mathcal{D}(a,b)$ with respect to $<_M$ and $<_L$. We reiterate this problem here. 

\begin{problem}[Schiffler, Problem 6.5]
   Classify the covering relation of $\mathcal{D}(a,b)$ with respect to either $<_M$ or $<_L$.
\end{problem}

While Lemma \ref{6.1.1} yields a partial characterization of the ordering $<_M$ and Theorem \ref{stronger} yields a method of comparing certain pairs of paths, a complete characterization of either $<_M$ or $<_L$ is far from complete. Nonetheless, it may be interesting to obtain a classification of which paths can be compared under either the orders $<_M$ or $<_L$ via our methods in this paper. 

\begin{problem}
    Give a combinatorial characterization all pairs of paths $(\omega, \omega')$ for which the method of Theorem \ref{stronger} yield a direct comparison of the two with respect to $<_M$ or $<_L$. 
\end{problem}

Lastly, Schiffler asked whether a relation between paths under either ordering could imply a relation under the other.

\begin{problem}[Schiffler, Problem 6.4]\label{6.4}
    Is it true that either: 
    \begin{itemize}
        \item $\omega <_M \omega' $ implies either $\omega <_L \omega'$ or $L(\omega) = L(\omega')$, or 
        \item $\omega <_L \omega' $ implies either $\omega <_M \omega'$ or $M(\omega) = M(\omega')$?
    \end{itemize}
\end{problem}

The following example shows this result in the negative. 

\begin{example}
    Consider the paths $\omega = RRRUURURU$ and $\omega' = RRRUURRUU$, as shown in Figure \ref{ex64}. We have $\omega, \omega' \in \mathcal{D}(5,4)$. Then $M(\omega) = 1115 < 1177 = M(\omega')$. However, we have $L(\omega) = \frac{\sqrt{11390621}}{1055} > \frac{17 \sqrt{48893}}{1177} = L(\omega')$. It follows that neither item of Problem \ref{6.4} can hold in general. Note that this example also shows that Lemma \ref{6.1.1} does not hold for the ordering $<_L$.
\end{example}

\begin{figure}[!ht]
\centering
\begin{minipage}{.5\textwidth}
  \centering
  \begin{tikzpicture}
        \draw[step=0.5,black,thin] (0,0) grid (2.5,2);
        \draw[red, thin] (0,0) -- (2.5,2);
        \draw[blue, ultra thick] (0,0) -- (1.5,0) -- (1.5,1) -- (2,1) -- (2,1.5) -- (2.5,1.5) -- (2.5,2);
    \end{tikzpicture}
  \label{fig:test1}
\end{minipage}%
\begin{minipage}{.5\textwidth}
  \centering
  \begin{tikzpicture}
        \draw[step=0.5,black,thin] (0,0) grid (2.5,2);
        \draw[red, thin] (0,0) -- (2.5,2);
        \draw[blue, ultra thick] (0,0) -- (1.5,0) -- (1.5,1) -- (2.5,1) -- (2.5,2);
    \end{tikzpicture}
  \label{fig:test2}
\end{minipage}
\caption{Lattice paths $\omega = RRRUURURU$  (left) and $\omega' = RRRUURRUU$ (right). We have $\omega <_M \omega'$ but $\omega' <_L \omega$.}
\label{ex64}
\end{figure}

However, the relation between the orders $<_M$ and $<_L$ remains an interesting question to consider. We therefore end with the following general problem.

\begin{problem}
    Classify all pairs $\omega, \omega'$ satisfying each of the following five classes of relations:
    \begin{enumerate}
        \item $\omega <_M \omega'$ and $\omega <_L \omega'$,
        \item $\omega <_M \omega' $ and $\omega' <_L \omega$,
        \item $\omega <_M \omega'$ and $L(\omega) = L(\omega')$,
        \item $M(\omega) = M(\omega')$ and $\omega <_L \omega'$, 
        \item $M(\omega) = M(\omega')$ and $L(\omega) = L(\omega')$. 
    \end{enumerate}

\end{problem}



\end{document}